\newtheorem{theorem}{Theorem}[section]
\newtheorem{lemma}[theorem]{Lemma}
\theoremstyle{definition}
\newtheorem {definition}[theorem]{Definition}
\newtheorem{coro}[theorem]{Corollary}
\theoremstyle{remark}
\newtheorem{remark}[theorem]{Remark}
\def\ra{\rightarrow}
\def\iy{\infty}
\def\be{\begin{equation}}
\def\ee{\end{equation}}
\def\ba{\begin{eqnarray*}}
\def\ea{\end{eqnarray*}}
\def\bae{\begin{eqnarray}}
\def\eae{\end{eqnarray}}
\def\bc{\begin{center}}
\def\ec{\end{center}}
\def\|J|{\parallel J\parallel}
\def\Pnk{\mathfrak{B}_{n,p}}
\def\Pnki{\mathfrak{B}_{n,p,i}}
\def\rs{\emph{reduction} step}
\def\ds{\emph{dominating set }}
\def\var{\mathrm{Var}}
\begin{document}
\title[Fluctuation of Eigenvalues for Random Toeplitz  Matrices]{Fluctuation of Eigenvalues for Random Toeplitz and Related Matrices}

\author[ D.-Z. Liu X. Sun and Z.-D. Wang]{ Dang-Zheng Liu, Xin Sun and Zheng-Dong Wang}

\address{School of Mathematical Sciences, Peking University, Beijing,
100871, P.R. China} \email{  dzliumath@gmail.com} \email{
xin.sun@hotmail.com}
 \email{ zdwang@pku.edu.cn}

\maketitle

\begin{abstract}

Consider random symmetric Toeplitz matrices $T_{n}=(
a_{i-j})_{i,j=1}^{n}$ with matrix entries $a_{j}, j=0,1,2,\cdots,$
being independent real  random variables
 such that

\be \nonumber \mathbb{E}[a_{j}]=0, \ \ \mathbb{E}[|a_{j}|^{2}]=1 \ \
\textrm{for}\,\ \ j=0,1,2,\cdots,\ee (homogeneity of 4-th moments)
\be{\nonumber\kappa=\mathbb{E}[|a_{j}|^{4}],}\ee
 \noindent and further (uniform boundedness)\be\nonumber\sup\limits_{j\geq 0} \mathbb{E}[|a_{j}|^{k}]=C_{k}<\iy\ \ \
\textrm{for} \ \ \ k\geq 3.\ee 

Under the assumption of  $a_{0}\equiv 0$, we prove a central limit
theorem for linear statistics of eigenvalues for a fixed polynomial
with degree $\geq 2$. Without the assumption, the CLT can be easily modified to a possibly non-normal limit law. In a special case where  $a_{j}$'s are
Gaussian, the result has been obtained by Chatterjee for some test
functions. Our derivation is based on a simple trace formula for
Toeplitz matrices and fine combinatorial analysis. Our method can
apply to other related random matrix models, including Hankel
matrices and product of several Toeplitz matrices in a flavor of
free probability theory etc. Since Toeplitz matrices are quite
different from the Wigner and Wishart matrices, our results enrich
this topic. 
\end{abstract}

\noindent\textbf{Keywords} \ \  Toeplitz (band) matrix; Hankel
matrix; Random matrices;  Linear statistics of eigenvalues; Central
limit theorem

\noindent\textbf{Mathematics Subject Classification (2010)} \ \
 60F05; 60B20 

\section{Introduction and main results}
\setcounter{equation}{0}

Toeplitz matrices emerge in many aspects of mathematics and physics
and also in plenty of applications, see Grenander and
Szeg$\ddot{o}$'s book \cite{GS} for
 a detailed introduction to deterministic Toeplitz matrices.
   The study of random Toeplitz matrices with
independent entries is proposed by Bai in his review paper
\cite{bai}. Since then,  the literature around the asymptotic
distribution of eigenvalues for random Toeplitz and related matrices
is very large,  including the papers of  Basak and Bose \cite{bb},
Bose et al. \cite{bcg, bm}, Bryc et al. \cite{bdj}, Hammond and
Miller \cite{hm}, Kargin \cite{kargin}, Liu and Wang \cite{LW},
Massey et al. \cite{mms}, etc. We refer to  \cite{bb} for recent
progress. However, the study of fluctuations of eigenvalues for
random Toeplitz matrices is quite little,  to the best   of our
knowledge, the only known result comes from Chatterjee
\cite{chatterjee} in the special case where the matrix entries are
Gaussian distributions. In this paper we will derive  a central
limit theorem (CLT for short) for linear statistics of eigenvalues
of random Toeplitz and related matrices, for some of which the
asymptotic distributions of eigenvalues are also new, including
sparse Toeplitz and Hankel matrices and singular values of powers of
Toeplitz matrices.

In the literature fluctuations of eigenvalues for random matrices
have been extensively  studied.  The investigation of central limit
theorems for linear statistics of eigenvalues of random matrices
dates back to the work of Jonsson~\cite{jonsson} on Gaussian Wishart
matrices. Similar work for the Wigner matrices was obtained by Sinai
and Soshnikov \cite{SS}. For further discussion on Wigner (band)
matrices and Wishart matrices and their generalized models, we refer
to Bai and Silverstein's book \cite{BS}, recent papers
\cite{AZ,chatterjee} and the references therein. For another class
of invariant random matrix ensembles, Johansson \cite{johansson3}
proved a general result which implies  CLT for linear statistics of
eigenvalues. Recently, Dumitriu and Edelman \cite {DE} and Popescu
\cite{popescu} proved that CLT holds for tridiagonal random matrix
models.

Another important contribution is the work of Diaconis et al.
\cite{Dsha,DEvans}, who proved similar results for random unitary
matrices. These results are closely connected to Szeg$\ddot{o}$'s
limit theorem (see \cite{johansson1}) for the determinant of
Toeplitz matrices with $(j,k)$ entry $\widehat{g}(j-k)$,  where
$\widehat{g}(k)=\frac{1}{2\pi}\int_{0}^{2\pi}g(e^{i\theta})e^{-i
k\theta} d\,\theta$, see \cite{diaconis, basor} and references
therein for connections between random matrices and Toeplitz
determinants. Here we emphasize that Szeg$\ddot{o}$'s limit theorem
implies a CLT.

Now we turn to our model.
 The matrix of the form
$T_{n}=(a_{i-j})_{i,j=1}^{n}$ is called a Toeplitz matrix. If we
introduce the Toeplitz or Jordan matrices
$B=(\delta_{i+1,j})_{i,j=1}^{n}$ and
$F=(\delta_{i,j+1})_{i,j=1}^{n}$, respectively called the ``backward
shift" and ``forward shift" because of their effect on the elements
of the standard basis $\{e_{1}, \cdots, e_{n}\}$ of
$\mathbb{R}^{n}$, then an $n \times n$ matrix $T$ can be written in
the form \be T=\sum_{j=0}^{n-1}a_{-j} B^{j}+ \sum_{j=1}^{n-1}a_{j}
F^{j}\ee if and only if $T$ is a Toeplitz matrix where $a_{-n+1},
\cdots, a_{0}, \cdots,a_{n-1}$ are complex numbers \cite{hj}. It is
worth emphasizing that this representation of a Toeplitz matrix is
of vital importance as the starting point
 of our method. The
``shift" matrices $B$ and $F$ exactly present the information of the
traces.

Consider a Toeplitz band matrix as follows. Given a band width
$b_{n}<n$, let \be{\eta_{ij}=
 \begin{cases} 1,\ \ |i-j|\leq b_{n};\\
0,\ \ \text{otherwise.}
\end{cases}
}\ee Then a Toeplitz band matrix is
\be{\label{bandtmatrices}T_{n}=(\eta_{ij}\,
a_{i-j})_{i,j=1}^{n}.}\ee Moreover, the Toeplitz band matrix $T_{n}$
can also be rewritten in the form

\be \label{basicrepresentation}T_{n}=\sum_{j=0}^{b_{n}}a_{-j} B^{j}+
\sum_{j=1}^{b_{n}}a_{j} F^{j}=
a_{0}I_{n}+\sum_{j=1}^{b_{n}}\left(a_{-j} B^{j}+a_{j}
F^{j}\right),\ee where $I_{n}$ is the identity matrix. Obviously, a
Toeplitz matrix can be considered as a band matrix with the
bandwidth $b_{n}=n-1$. In this paper, the basic model under
consideration consists of $n \times n$ random symmetric Toeplitz
band matrices $T_{n}=(\eta_{ij}\, a_{i-j})_{i,j=1}^{n}$ in
Eq.\,(\ref{bandtmatrices}). We assume that $a_{j}=a_{-j}$ for
$j=1,2,\cdots$, and $\{a_{j}\}_{j=1}^{\iy}$ is a sequence of
independent real  random variables 
 such that

\be\label{symmetrictoeplitz1} \mathbb{E}[a_{j}]=0, \ \
\mathbb{E}[|a_{j}|^{2}]=1 \ \ \textrm{for}\,\ \ j=1,2,\cdots,\ee
(homogeneity of 4-th moments)
\be{\label{4thmoment}\kappa=\mathbb{E}[|a_{j}|^{4}],}\ee
 \noindent and further (uniform boundedness)\be\label{symmetrictoeplitz2}\sup\limits_{j\geq 1} \mathbb{E}[|a_{j}|^{k}]=C_{k}<\iy\ \ \
\textrm{for} \ \ \ k\geq 3.\ee 
In addition, we also assume $a_{0}\equiv 0$ (we will explain in  Remarks \ref{diagonalentry} and \ref{remarka0} below!) and  the bandwidth $b_{n}\ra
\iy$ but $b_{n}/n
\rightarrow b\in [0,1]$ as $n\rightarrow \infty$. 

 Set $A_{n}=\frac{T_{n}}{\sqrt{b_{n}}}$, a linear statistic of  eigenvalues $\lambda_{1}, \cdots, \lambda_{n}$ of $A_{n}$
 is a function of the form
 \be
\frac{1}{n}\sum_{j=1}^{n}f(\lambda_{j}),
 \ee
where $f$ is some fixed function. In particular, when $f(x)=x^{p}$
we write\be \omega_{p}
=\frac{\sqrt{b_{n}}}{n}\sum_{j=1}^{n}\left(\lambda_{j}^{p}-\mathbb{E}[\lambda_{j}^{p}]\right).
 \ee
These $\omega_{p}$, $p=2,3, \ldots,$ are our main objects. Note that
$\frac{1}{n}\sum_{j=1}^{n}\left(\lambda_{j}^{p}-\mathbb{E}[\lambda_{j}^{p}]\right)$
converges weakly to zero as $n \longrightarrow \infty$, moreover
under the condition $$\sum_{j=1}^{\iy}\frac{1}{b^{2}_{n}}<\iy$$ we
have a strong convergence, see \cite{bb, kargin, LW}. We remark that
the fluctuations \be
=\frac{1}{n}\sum_{j=1}^{n}\lambda_{j}^{p}-\frac{1}{n}\sum_{j=1}^{n}\mathbb{E}[\lambda_{j}^{p}].
 \ee
are of order $\frac{1}{\sqrt{b_{n}}}$ while those for Wigner
matrices are of order $\frac{1}{n}$, more like the case of classical
central limit theorem. This shows that the correlations between
eigenvalues for Toeplitz matrices are much weaker  than those for
Wigner matrices (another different phenomenon is that the limiting
distribution for random Toeplitz matrices has unbounded support, see
\cite{bdj, hm}). The potential reasons for this phenomenon may come
from the fact that the order of the number of independent variables
is $O(n)$ for Toeplitz matrices while it is $O(n^{2})$ for Wigner
matrices. On the other hand, the eigenvalues of random Toeplitz
matrices are obviously not independent, so it is different from the
case of CLT for independent variables.

In special case that the matrix entries $a_{j}$ are Gaussian
distributions, by using his notion
  of `second order Poincar\'e inequalities' Chatterjee in \cite{chatterjee}
  proved
the following theorem:

\vspace{\medskipamount} \noindent{\bf Theorem} (\cite{chatterjee},
Theorem 4.5) {\it Consider the Gaussian Toeplitz matrices
$T_{n}=(a_{i-j})_{i,j=1}^{n}$, i.e. $a_{j}=a_{-j}$ for
$j=1,2,\cdots$, and $\{a_{j}\}_{j=0}^{\iy}$ is a sequence of
independent standard Gaussian random variables. Let $p_{n}$ be a
sequence of positive integers such that $p_{n} = o(\log n/\log \log
n)$. Let $A_{n}=T_{n}/\sqrt{n}$, then, as $n \ra \iy$,
\[
\frac{\mathrm{tr} (A_{n}^{p_{n}}) - \mathbb{E}[\mathrm{tr}
(A_{n}^{p_{n}})]}{\sqrt{\var\left(\mathrm{tr}
(A_{n}^{p_{n}})\right)}} \ \text{ converges in total variation to }
\ N(0,1).
\]The  $\mathrm{CLT}$ also holds for $\mathrm{tr} (f(A_{n}))$, when $f$ is a
fixed nonzero polynomial with nonnegative coefficients. }
\vspace{\medskipamount}

The author remarked that
  the above theorem is only for Gaussian Toeplitz matrices based on the obvious fact: considering the function
$f(x)=x$,    CLT may not  hold for linear statistics of
non-Gaussian Toeplitz matrices.  The author also  remarked that the
above theorem says nothing about the limiting formula of the
variance $\var\left(\mathrm{tr} (A_{n}^{p_{n}})\right)$. However, we
assert that CLT holds for a test function  $f(x)=x^{2p}$ even for
non-Gaussian Toeplitz matrices. When $f(x)=x^{2p+1}$ the fluctuation is Gaussian if and only if the diagonal random variable $a_{0}$ is Gaussian. Moreover, if we suppose $a_{0}\equiv
0$, we can obtain CLT for any fixed polynomial test functions.  On
the other hand, we can calculate the variance in terms of integrals
associated with pair partitions. Unfortunately, our method fails to deal
with the test function $f(x)=x^{p_{n}}$, where $p_{n}$ depends on
$n$.

Our study is inspired by the work of Sinai and Soshnikov \cite{SS},
but new ideas are needed since the structure of Toeplitz matrices is
quite different from that of Wigner matrices. In addition, our
method can apply to other related random matrix models, including
Hermitian Toeplitz matrices, Hankel matrices, sparse Toeplitz and
Hankel matrices, singular values of powers of Toeplitz matrices, and
product of several matrices in a flavor of free probability theory.

Now we state the main theorem of our paper as follows.
\begin{theorem}\label{tpg}
Let $T_{n}$ be  real symmetric
((\ref{symmetrictoeplitz1})--(\ref{symmetrictoeplitz2})) random
Toeplitz band matrix with the bandwidth $b_{n}$, where $b_{n}/n
\rightarrow b \in [0,1]$ but $b_{n}\ra \iy$ as $n\rightarrow
\infty$. Set $A_{n}=T_{n}/\sqrt{b_{n}}$ and
\be{\omega_{p}=\frac{\sqrt{b_{n}}}{n}\left(\mathrm{tr}(A_{n}^{p})-\mathbb{E}[\mathrm{tr}(A_{n}^{p})]\right).}\ee
For every  $p\geq 2$, we have\be \omega_{p}\longrightarrow
N(0,\sigma_p^2)\ee in distribution as $n\rightarrow \infty$.
Moreover, for a given polynomial \be{Q(x)=\sum_{j=2}^{p}q_{j}
x^{j}}\ee with degree $p\geq 2$, set
\be{\omega_{Q}=\frac{\sqrt{b_{n}}}{n}\left(\mathrm{tr}Q(A_{n})-\mathbb{E}[\mathrm{tr}Q(A_{n})]\right),}\ee
we also have\be \omega_{Q}\longrightarrow N(0,\sigma_Q^2)\ee in
distribution as $n\rightarrow \infty$. Here the variances
$\sigma_p^{2}$ and $\sigma_Q^2$ will be given in section
\ref{variance}.
\end{theorem}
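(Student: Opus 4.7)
The plan is to prove the theorem by the method of moments, establishing joint convergence of $(\omega_{p_1},\ldots,\omega_{p_r})$ to a centered Gaussian vector; the statement for $\omega_Q$ will then follow by linearity. Concretely, I would show that for every integer $m\geq 1$,
\[
\mathbb{E}[\omega_p^{2m}]\longrightarrow (2m-1)!!\,\sigma_p^{2m},\qquad \mathbb{E}[\omega_p^{2m+1}]\longrightarrow 0,
\]
with analogous statements for mixed moments. The normalization to keep in mind is
\[
\omega_p=\frac{1}{n\,b_n^{(p-1)/2}}\bigl(\mathrm{tr}(T_n^p)-\mathbb{E}[\mathrm{tr}(T_n^p)]\bigr),
\]
so the task is to extract the exact order $n^{k}b_n^{k(p-1)/2}$ from the centered $k$-th moment of $\mathrm{tr}(T_n^p)$.

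Using the representation (\ref{basicrepresentation}) together with $a_{0}\equiv 0$, I would first expand
\[
\mathrm{tr}(T_n^p)=\sum_{\varepsilon\in\{+,-\}^p}\;\sum_{j_1,\ldots,j_p=1}^{b_n}a_{j_1}\cdots a_{j_p}\,\mathrm{tr}\bigl(M_{\varepsilon_1}^{j_1}\cdots M_{\varepsilon_p}^{j_p}\bigr),\quad M_{+}=F,\ M_{-}=B,
\]
or equivalently write it as a sum over closed lattice paths $i_0\to i_1\to\cdots\to i_p=i_0$ with increments $s_\ell=i_{\ell-1}-i_\ell$ satisfying $|s_\ell|\leq b_n$ and $\sum_\ell s_\ell=0$, the weight being $\prod_\ell a_{|s_\ell|}$. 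Raising to the $k$-th power and centering gives $\mathbb{E}[(\mathrm{tr}(T_n^p)-\mathbb{E}[\cdot])^k]$ as a sum over $k$-tuples of such closed paths. The next step is to group terms by the set partition $\pi$ of the $pk$ step-slots induced by equality of the values $|s_\ell|$; since the $a_j$ are mean-zero and independent, partitions with a singleton block vanish after centering, so only $\pi$ with every block of size $\geq 2$ contribute. A careful power count then shows that each block of size $r$ yields one free $|s|$-value ranging in $\{1,\ldots,b_n\}$ and one factor $\mathbb{E}[a_j^{r}]$, while the remaining freedom in base points $i_0$ produces a power of $n$ controlled by the number of connected components of the ``trace-coupling graph.'' The dominant contribution to $\mathbb{E}[\omega_p^k]$ is achieved precisely at pair partitions whose induced coupling matches the $k$ trace copies into disjoint pairs, forcing $k=2m$ and giving the Gaussian count $(2m-1)!!\,\sigma_p^{2m}$; odd $k$ loses at least a factor $b_n^{-1/2}\to 0$. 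The joint CLT for $(\omega_{p_1},\ldots,\omega_{p_r})$ follows from the same counting applied to mixed moments, and then $\omega_Q\Rightarrow N(0,\sigma_Q^2)$ with $\sigma_Q^2=\sum_{i,j}q_iq_j\,C_\infty(i,j)$ for some limiting covariances $C_\infty(i,j)$.

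The main obstacle will be the combinatorial step of identifying the admissible pair partitions and computing their contributions. Unlike the Sinai--Soshnikov analysis \cite{SS} for Wigner matrices, where the closure of each trace amounts to walking on a vertex graph and the leading partitions correspond to tree-like double-edge structures, here the closure of each trace is an additive constraint $\sum_\ell\varepsilon_\ell|s_\ell|=0$ on \emph{values} of the indices, which couples the pair partition $\pi$ to the sign pattern $\varepsilon$ in a nontrivial way. One must distinguish pairs living inside a single trace (which behave like a non-crossing parenthesization of that trace) from pairs bridging two distinct traces (which glue the two closure equations), and check that among all such possibilities only specific ``admissible'' pairings carry the maximal power of $n\cdot b_n^{(p-1)/2}$; the fourth moment $\kappa$ enters precisely through pairs of steps that coincide with the same sign. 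Secondary difficulties are the $O(1)$ boundary defects of the shift matrices (one has $BF=I-e_ne_n^\top$ and $FB=I-e_1e_1^\top$ rather than $BF=FB=I$) and the need to verify that partitions containing a block of odd size are strictly lower-order; these can be absorbed by counting arguments in the spirit of \cite{bdj,hm,bb}. The eventual $\sigma_p^2$ should emerge as an integral over a subspace of pair-matched shift patterns, matching the variance formula to be given in Section \ref{variance}.
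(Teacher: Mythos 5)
Your overall route coincides with the paper's: both proceed by the method of moments, expand $\mathrm{tr}(T_n^p)$ via the shift-matrix representation into a sum over index vectors $J=(j_1,\dots,j_p)$ with the balance constraint $\sum_\ell j_\ell=0$, classify the $k$-tuples of such vectors by the partition induced by coincidences of $|j_\ell|$, and argue that the leading contribution comes from configurations in which the $k$ traces are matched into disjoint correlated pairs, yielding the Wick count $(2m-1)!!$. However, the step you dispatch with ``a careful power count then shows\dots'' is exactly the hard core of the paper (Lemmas \ref{cluster} and \ref{general case}), and the naive count you describe is not correct as stated. The heuristic ``each coincidence between two traces reduces the degree of freedom in the $|j|$-values by one, and a connected cluster of $l$ traces needs $l-1$ coincidences, so $l\geq 3$ loses $b_n^{(l-2)/2}$'' fails for certain configurations: the paper explicitly warns that ``adding one correlation does not necessarily lead the freedom degree to decrease by one.'' The critical bad configuration is the one treated in Case III of the proof of Lemma \ref{general case}: a single value $\theta$ occurring exactly once in each of the $l\geq 3$ vectors, with all remaining entries of each vector paired internally. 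There the cluster is held together by a single shared value, the number of distinct free values is $1+\frac{\sum p_i-l}{2}$, and one must extract \emph{two} extra determined values from the $l$ balance equations to beat the target exponent $\frac{\sum p_i-l}{2}$. This is only possible because $a_0\equiv 0$ excludes $\theta=0$; indeed, without that hypothesis the CLT genuinely fails for odd $p$ (Remark \ref{diagonalentry}), so no power count that ignores the role of $0$ and of the balance equations can be correct. Resolving this requires the paper's machinery of \emph{reduction} steps $J\bigvee_\theta J'$, minimal \emph{dominating sets}, and the case analysis on $h$, $l_0$, $m$ — none of which is present or replaceable by the argument you sketch.

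Two smaller points. First, your claim that the power of $n$ is ``controlled by the number of connected components of the trace-coupling graph'' imports Wigner intuition that does not apply here: each of the $k$ traces carries its own base point $i^{(t)}$ summed freely over $[n]$, the indicator $I_J$ never couples base points of different traces, and the $n$-power is always exactly $n^k$, cancelling the normalization; the entire bookkeeping lives in powers of $b_n$. Second, the worry about boundary defects $BF=I-e_ne_n^{\top}$ is a non-issue: the trace formula of Lemma \ref{toeplitzlemma} is exact, with all boundary effects absorbed into the indicator $I_J=\prod_k\chi_{[1,n]}(i+\sum_{l\leq k}j_l)$, which in the limit simply becomes the characteristic functions appearing in the integrals of Section \ref{Integrals associated with pair partitions}. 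Your identification of where $\kappa$ enters (a four-element block straddling two traces, i.e.\ $\mathcal{P}_{2,4}(p,q)$) is correct and matches Theorem \ref{covariancecalculation}.
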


From the proof of our main theorem, we can easily derive an
interesting result concerning product of independent variables whose
subscripts satisfy certain ``balance'' condition. When $p=2$, it is
a direct result from the classical central limit theorem. Here we
state it but omit its proof, see Remark \ref{perturbancefactor} for
detailed explanation.

\begin{coro}\label{corollary} Suppose that
$a_{j}=a_{-j}$ for $j=1,2,\cdots$, and $\{a_{j}\}_{j=1}^{\iy}$ is a
sequence of independent random variables satisfying the assumptions
(\ref{symmetrictoeplitz1})--(\ref{symmetrictoeplitz2}).
For every $p\geq 2$, \be{\frac{1}{n^{\frac{p-1}{2}}}\sum^{n}_{0\neq
j_{1},\ldots,
j_{p}=-n}\left(\prod_{l=1}^{p}a_{j_{l}}-\mathbb{E}[\prod_{l=1}^{p}a_{j_{l}}]\right)
\large{\delta}_{0,\sum\limits_{l=1}^{p}j_{l}}}\ee converges in
distribution to a Gaussian distribution $N(0,\sigma_p^2)$. Here the
variance $\sigma_p^{2}$ as in section \ref{variance} corresponding
to the case $b=0$.
\end{coro}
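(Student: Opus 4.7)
The plan is to reduce Corollary \ref{corollary} to the moment machinery developed for Theorem \ref{tpg}, specialised to the regime where the ``walk weight'' of the trace expansion is trivial. First, I would unpack the trace formula implicit in the decomposition (\ref{basicrepresentation}): for a Toeplitz band matrix with bandwidth $b_n$, the reindexation by increments $\xi_k=i_{k-1}-i_k$ yields
\begin{equation*}
\mathrm{tr}(T_n^p)=\sum_{\xi_1+\cdots+\xi_p=0,\ 0<|\xi_k|\leq b_n}w_n(\xi_1,\ldots,\xi_p)\,\prod_{k=1}^p a_{\xi_k},
\end{equation*}
where $w_n(\xi)=\#\{i_0\in\{1,\ldots,n\}:i_0-S_k\in\{1,\ldots,n\}\text{ for all }k\}$ with partial sums $S_k=\xi_1+\cdots+\xi_k$. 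The key observation is that $w_n(\xi)/n\in[0,1]$ is the only source of $b$-dependence in the limiting variance $\sigma_p^2$ of Theorem \ref{tpg}: when $b=0$ we have $\max_k S_k-\min_k S_k\leq p\,b_n=o(n)$, hence $w_n/n\to 1$ uniformly, while for $b>0$ this ratio has a nontrivial limit that enters the variance via the integrals of section \ref{variance}.

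Next, I would note that the sum of interest in the corollary,
\begin{equation*}
U_n:=\sum_{0<|j_l|\leq n,\ \sum_l j_l=0}\Bigl(\prod_l a_{j_l}-\mathbb{E}\bigl[\textstyle\prod_l a_{j_l}\bigr]\Bigr),
\end{equation*}
is exactly the centred version of the above trace expansion with the walk weight $w_n$ replaced by the constant $1$ and the allowed range extended to $|j|\leq n$. The Sinai--Soshnikov type moment method used for Theorem \ref{tpg} then applies to $U_n/n^{(p-1)/2}$ essentially verbatim: I would expand the $m$-th centred moment, use independence and centring of $\{a_j\}_{j\geq 1}$ to restrict to configurations in which every distinct value of $|j|$ appears at least twice across the $m$ index tuples, and invoke the same pair-partition/reduction bookkeeping to identify the leading contributions.

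The final step is the matching of variances. The leading term in $\mathbb{E}[(U_n/n^{(p-1)/2})^{2m}]$ comes from pair partitions of the $mp$ slots; since the walk weight is absent, each ``free'' parameter of a partition contributes a factor $2n$ up to $o(1)$ rather than an integral of $w_n/n$, which is precisely the $b=0$ specialisation of the variance formula in section \ref{variance}. Odd centred moments vanish by the same reductions as in the proof of Theorem \ref{tpg}. The main obstacle I anticipate will be a clean justification that enlarging the summation range from $[-b_n,b_n]$ to $[-n,n]$ does not disturb this identification: one must show that the configurations with some $|j_l|$ macroscopic in $n$ contribute negligibly once the balance constraint $\sum_l j_l=0$ and the pairing constraints are imposed, which follows from the same truncation and dominating-set estimates used to control lower-order terms in the main theorem.
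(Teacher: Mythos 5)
Your proposal is correct and coincides with the argument the paper itself indicates (the proof is omitted there and delegated to Remark \ref{perturbancefactor}): one sets the walk weight $I_J\equiv 1$ in the trace formula of Lemma \ref{toeplitzlemma}, replaces $b_n$ by $n$, and reruns the expectation/covariance/higher-moment analysis of sections \ref{expectation}--\ref{highermoments}, the absence of the indicator factors collapsing every limiting integral to its $b=0$ value. The only quibble is your anticipated ``obstacle'': no truncation of macroscopic $|j_l|$ is required, since the sum in the corollary is exactly the weight-one expansion over the full range $\{\pm 1,\ldots,\pm n\}^{p}$ and those configurations contribute to (rather than perturb) the $b=0$ limit.
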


\begin{remark}[on the diagonal entry $a_{0}$]\label{diagonalentry} Rewrite $A_{n}(a_{0})=A_{n}(0)+\frac{a_{0}}{\sqrt{b_{n}}}I_{n}$, where $A_{n}(0)$ denotes the matrix with $a_{0}=0$. It is easy to see that
\be
\omega_{p}(a_{0})=\omega_{p}(0)+p\frac{\mathrm{tr}(A_{n}^{p-1}(0))}{n}(a_{0}-\mathbb{E}[a_{0}])+O(b_{n}^{-1/2}),
\ee
 which converges in distribution to the distribution of $\sigma_{p}\mathfrak{n}+p M_{p-1}(a_{0}-\mathbb{E}[a_{0}])$. Here the $\mathfrak{n}$ is the standard normal distribution, independent of $a_{0}$, and $M_{p-1}$ is the (p-1)-th moment in Theorem \ref{limitmoment}. Since $M_{p}=0\Longleftrightarrow p \  \mbox{is  odd}$, if  $a_{0}$ is neither a constant a.s. nor Gaussian then the fluctuation  is not Gaussian for odd $p$.
\end{remark}

The remaining part of the paper is arranged as follows. Some
integrals associated with pair partitions are defined in section
\ref{Integrals associated with pair partitions}. Sections
\ref{expectation}, \ref{variance} and  \ref{highermoments} are
devoted to the proof of Theorem \ref{tpg}.  We will extend our main
result
to  other models closely connected with Toeplitz matrices in section \ref{extensions}.

\section{Integrals associated with pair partitions}\label{Integrals associated with pair partitions}
\setcounter{equation}{0} In order to calculate the moments of the
limit distribution and the limiting  covariance matrix of  random
variables $\omega_{p}$, we first review some basic combinatorical
concepts, and  then define some integrals associated with pair
partitions.

\begin{definition} Let the set $[n]=\{1,2,\cdots,n\}$.

(1) We call $\pi=\{V_{1},\cdots,V_{r}\}$ a partition of $[n]$ if the
blocks  $V_{j} \,(1\leq j \leq r)$ are pairwise disjoint, non-empty
subsets of $[n]$ such that $[n]=V_{1}\cup\cdots\cup V_{r}$. The
number of blocks of $\pi$ is denoted by $|\pi|$, and the number of
 elements of $V_{j}$ is denoted by $|V_{j}|$.

(2) Without loss of generality, we assume that $V_{1},\cdots,V_{r}$
have been arranged such that $s_{1}<s_{2}<\cdots<s_{r}$, where
$s_{j}$ is the smallest number of $V_{j}$. Therefore we can define
the projection $\pi (i)=j$ if $i$ belongs to the block $V_{j}$;
furthermore for two elements $p,q$ of $[n]$ we write
$p\thicksim_{\pi} q$ if $\pi (p)=\pi (q)$.

(3) The set of all partitions of $[n]$ is denoted by
$\mathcal{P}(n)$, and the subset consisting of all pair partitions,
i.e. all $|V_{j}|=2$, $1\leq j \leq r$, is denoted by
$\mathcal{P}_{2 }(n)$. 
 Note that $\mathcal{P}_{2 }(n)$ is an empty set if $n$ is
odd.

(4) Suppose $p,q$ are positive integers and  $p+q$ is even, we
denote a subset of $\mathcal{P}_{2}(p+q)$ by $\mathcal{P}_{2}(p,q)$,
which consists of such pair partitions $\pi$: there exists $1\leq i
\leq p<j \leq p+q$ such that $i\thicksim_{\pi} j$ (we say that there
is one crossing match in $\pi$).

(5) When $p$ and $q$ are both  even, we denote a subset of
 $\mathcal{P}(p+q)$ by $\mathcal{P}_{2,4}(p,q)$, which consists
of such  partitions $\pi=\{V_{1},\cdots,V_{r}\}$ satisfying

(i) $|V_{j}|=2$, $1\leq j \leq r, j\neq i$ and  $|V_{i}|=4$ for some
$i$.

(ii)  $V_{j}\subseteq \{1,2, \ldots, p\} \, \mathrm{or}\, \{p+1,
p+2, \ldots, p+q\}$ for $1\leq j \leq r, j\neq i$.

(iii) two elements of $V_{i}$ come from $\{1,2, \ldots, p\}$ and the
other two come from
$\{p+1, p+2, \ldots, p+q\}$. 

For other cases of $p$ and $q$, we     assume
$\mathcal{P}_{2,4}(p,q)$ is an empty set.
\end{definition}

Now  we define several types of definitive integrals associated with
$\pi \in \mathcal{P}_{2}(p,q)$ or $\pi \in \mathcal{P}_{2,4}(p,q)$.
For the reader's  convenience, we suggest to omit them for the
moment  and refer to them when they are needed in sections
\ref{expectation} and \ref{variance}. Let the parameter $b\in
[0,1]$.

First, for $\pi \in \mathcal{P}_{2}(p,q)$ we set
\be{\epsilon_{\pi}(i)=
 \begin{cases} 1,\ \
i \ \text{is the smallest number of}\  \pi^{-1}(\pi(i));\\
-1,\ \ \text{otherwise.}
\end{cases}
}\ee To every pair partition $\pi \in \mathcal{P}_{2}(p,q)$, we
construct a projective relation between two groups of unknowns
${y_{1},\ldots,y_{p+q}}$ and ${x_{1},\ldots,x_{\frac{p+q}{2}}}$ as
follows:
\be{\label{unknownsrelation}\epsilon_{\pi}(i)\,y_{i}=\epsilon_{\pi}(j)\,y_{j}=x_{\pi(i)}
}\ee whenever $i\thicksim_{\pi} j$. Thus, we have an identical
equation \be{\label{identicaleq}\sum_{j=1}^{p+q}y_{j}\equiv 0 }.\ee

For $x_{0}, y_{0}\in [0,1]$ and ${x_{1},\ldots,x_{\frac{p+q}{2}}\in
[-1, 1]}$, we define two kinds of integrals with Type I by
 \begin{align}
 \label{typeI-} & f_{I}^{-}(\pi)=\nonumber\\
& \int_{[0,1]^{2}\times
[-1,1]^{\frac{p+q}{2}}}\delta\left(\sum_{i=1}^{p}y_{i}\right)
\prod_{j=1}^{p}\chi_{[0,1]}(x_{0}+b\sum_{i=1}^{j}y_{i})
\prod_{j'=p+1}^{p+q}\chi_{[0,1]}(y_{0}+b\sum_{i=p+1}^{j'}y_{i})\,d\,y_{0}
\prod_{l=0}^{\frac{p+q}{2}}d\, x_{l}\end{align} and
\begin{align}\label{typeI+} &f_{I}^{+}(\pi)=\nonumber\\
&\int_{[0,1]^{2}\times
[-1,1]^{\frac{p+q}{2}}}\delta\left(\sum_{i=1}^{p}y_{i}\right)
\prod_{j=1}^{p}\chi_{[0,1]}(x_{0}+b\sum_{i=1}^{j}y_{i})
\prod_{j'=p+1}^{p+q}\chi_{[0,1]}(y_{0}-b\sum_{i=p+1}^{j'}y_{i})\,d\,y_{0}
\prod_{l=0}^{\frac{p+q}{2}}d\, x_{l}.
\end{align} Here $\delta$ is
the Dirac function and $\chi$ is the indicator  function. Note that
$\sum_{j=1}^{p}y_{j}\neq 0$ by the definition of
$\mathcal{P}_{2}(p,q)$, therefore the above integrals are multiple
integrals in
$(\frac{p+q}{2}+1)$ variables. 

Next, for $\pi=\{V_{1},\ldots,V_{\frac{p+q}{2}-1}\}\in
\mathcal{P}_{2,4}(p,q)$ (denoting the block with four elements by
$V_{i}$ ), we set  for $\pi(k)\neq i$ \be{\tau_{\pi}(k)=
 \begin{cases} 1,\ \
k \ \text{is the smallest number of}\  \pi^{-1}(\pi(k));\\
-1,\ \ \text{otherwise }
\end{cases}
}\ee while for $\pi(k)=i$ \be{\tau_{\pi}(k)=
\begin{cases} 1,\ \
k \ \text{is the smallest or largest number of}\  \pi^{-1}(\pi(k));\\
-1,\ \ \text{otherwise}.
\end{cases}}\ee

 To every partition $\pi \in \mathcal{P}_{2,4}(p,q)$, we
construct a projective relation between two groups of unknowns
${y_{1},\ldots,y_{p+q}}$ and ${x_{1},\ldots,x_{\frac{p+q}{2}-1}}$ as
follows:
\be{\label{unknownsrelation}\tau_{\pi}(i)\,y_{i}=\tau_{\pi}(j)\,y_{j}=x_{\pi(i)}
}\ee whenever $i\thicksim_{\pi} j$. Then two kinds of integrals with
Type II are defined respectively  by
 \begin{align}\label{typeII-} f_{II}^{-}(\pi)=
 \int_{[0,1]^{2}\times
[-1,1]^{\frac{p+q}{2}-1}}
\prod_{j=1}^{p}\chi_{[0,1]}(x_{0}+b\sum_{i=1}^{j}y_{i})
\prod_{j'=p+1}^{p+q}\chi_{[0,1]}(y_{0}+b\sum_{i=p+1}^{j'}y_{i})\,d\,y_{0}
\prod_{l=0}^{\frac{p+q}{2}-1}d\, x_{l}\end{align} and
\begin{align}\label{typeII+} f_{II}^{+}(\pi)=
 \int_{[0,1]^{2}\times
[-1,1]^{\frac{p+q}{2}-1}}
  \prod_{j=1}^{p}\chi_{[0,1]}(x_{0}+b\sum_{i=1}^{j}y_{i})
\prod_{j'=p+1}^{p+q}\chi_{[0,1]}(y_{0}-b\sum_{i=p+1}^{j'}y_{i})\,d\,y_{0}
\prod_{l=0}^{\frac{p+q}{2}-1}d\, x_{l}\nonumber.\end{align}

\section{Mathematical expectation}
\label{expectation} \setcounter{equation}{0} In this section,  we
will review some results about  the moments of the limiting
distribution  of eigenvalues in \cite{LW}, for the convenience of
the readers and further discussion.

\begin{theorem}\label{limitmoment}$\mathbb{E}[\frac{1}{n}\mathrm{tr}(A_{n}^{2k})]=M_{2k}+o(1)$
and $\mathbb{E}[\frac{1}{n}\mathrm{tr}(A_{n}^{2k+1})]=o(1)$
 as $n \longrightarrow  \iy$ where \be
\label{bandtoeplitz:moment}M_{2k}=\sum_{\pi\in \mathcal{P}_{2
}(2k)}\int_{[0,1]\times
[-1,1]^{k}}\prod_{j=1}^{2k}\chi_{[0,1]}(x_{0}+b\sum_{i=1}^{j}\epsilon_{\pi}(i)\,x_{\pi(i)})
\prod_{l=0}^{k}\mathrm{d}\, x_{l}.\ee
\end{theorem}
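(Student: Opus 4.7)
The plan is to compute $\mathbb{E}[\frac{1}{n}\mathrm{tr}(A_{n}^{p})]$ via a trace expansion based on the shift-matrix representation \eqref{basicrepresentation}, classify the contributing tuples by set partitions, and show that only pair partitions of $[2k]$ with ``opposite sign'' matchings survive in the limit.

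First, using the representation $T_{n}=\sum_{j=1}^{b_{n}}(a_{-j}B^{j}+a_{j}F^{j})$ with $a_{0}\equiv 0$, I would write each $X_{j}$ ($j\in\{-b_{n},\ldots,b_{n}\}\setminus\{0\}$) as the shift operator sending $e_{k}\mapsto e_{k+j}$ when $k+j\in[1,n]$ and to $0$ otherwise, with associated coefficient $a_{|j|}$. Expanding the trace gives
\[
\mathrm{tr}(T_{n}^{p})=\sum_{j_{1},\ldots,j_{p}}a_{|j_{1}|}\cdots a_{|j_{p}|}\,\#\Bigl\{k_{0}\in[n]:k_{0}+\sum_{i=1}^{s}j_{i}\in[1,n]\ \forall s,\ \sum_{i=1}^{p}j_{i}=0\Bigr\}.
\]
Taking expectations and grouping the $p$-tuples by the partition $\pi\in\mathcal{P}(p)$ they induce through $i\sim_{\pi}\ell\iff|j_{i}|=|j_{\ell}|$, independence and $\mathbb{E}[a_{j}]=0$ eliminate every partition with a singleton block. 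A rough counting argument then shows each partition $\pi$ contributes $O(n\,b_{n}^{|\pi|})$ tuples; after dividing by $n\,b_{n}^{p/2}$ (from the normalization $A_{n}=T_{n}/\sqrt{b_{n}}$ and by $n$), the order is $O(b_{n}^{|\pi|-p/2})$. Combined with $|V|\ge 2$ for every block, this forces $|\pi|=p/2$, i.e.\ $p$ must be even and $\pi\in\mathcal{P}_{2}(p)$; for odd $p$ the best one can do is $|\pi|=(p-1)/2$, yielding $O(b_{n}^{-1/2})=o(1)$.

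For $p=2k$ and a fixed pair partition $\pi\in\mathcal{P}_{2}(2k)$, I parametrize by writing $|j_{i}|/b_{n}=|x_{\pi(i)}|$ with $x_{l}\in[-1,1]$ and letting a sign $\epsilon_{i}\in\{\pm 1\}$ represent the direction of each shift, subject to $|j_{i}|=|j_{\ell}|$ within each pair. The global constraint $\sum_{i}j_{i}=0$ then splits into two cases: (a) the signs within every pair are opposite (so the two $j$'s in each pair cancel and $\sum j_{i}=0$ is automatic), or (b) at least one pair has equal signs, imposing a nontrivial linear relation that cuts the $x_{l}$--dimension by one, producing an extra factor $O(1/b_{n})$ and hence an $o(1)$ contribution. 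Choosing the ``opposite sign'' representatives with $\epsilon_{\pi}(i)=1$ when $i$ is the smaller index of its pair and $-1$ otherwise gives the pairing $j_{i}=\epsilon_{\pi}(i)\,x_{\pi(i)}b_{n}$ appearing in \eqref{bandtoeplitz:moment}.

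Finally, setting $k_{0}=n x_{0}$, the boundary constraint $k_{0}+\sum_{i\le s}j_{i}\in[1,n]$ becomes $x_{0}+(b_{n}/n)\sum_{i\le s}\epsilon_{\pi}(i)\,x_{\pi(i)}\in[0,1]$, which converges (via $b_{n}/n\to b$) to the characteristic function $\chi_{[0,1]}(x_{0}+b\sum_{i\le s}\epsilon_{\pi}(i)\,x_{\pi(i)})$. The Riemann sum over $k_{0}\in[n]$ and $|j_{l}|\in[1,b_{n}]$ (with assigned signs) converges to the multiple integral on $[0,1]\times[-1,1]^{k}$ displayed in \eqref{bandtoeplitz:moment}, summed over $\pi\in\mathcal{P}_{2}(2k)$. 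Summing the moment contributions from each pair partition yields $M_{2k}$. I expect the main technical obstacle to be the uniform estimate showing that the ``same sign'' configurations and all non--pair partitions (especially those mixing pairs with a single quadruple) truly lose at least one factor of $b_{n}$, which requires a careful analysis of how many shift parameters remain free once the closing constraint $\sum j_{i}=0$ is imposed; this is handled by a block-by-block dimension count analogous to the one used in \cite{bdj,LW}.
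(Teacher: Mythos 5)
Your proposal is correct and follows essentially the same route as the paper: the same trace formula via the shift-operator representation, the same classification of index tuples (singleton blocks killed by mean zero, blocks of size $\ge 3$ losing half a power of $b_{n}$, same-sign pairs losing a degree of freedom through the closing constraint $\sum j_{i}=0$), and the same Riemann-sum passage to the integral over $[0,1]\times[-1,1]^{k}$. The "technical obstacle" you flag at the end is exactly the $\Gamma_{31}/\Gamma_{32}$ dimension count the paper carries out, and your sketch of it is the right argument.
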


Let us first give a lemma about traces of Toeplitz band matrices.
Although its proof is simple, it is very useful in treating random
matrix models closely related to Toeplitz matrices.

\begin{lemma}\label{toeplitzlemma}
For Toeplitz band matrices $T_{l,n}=(\eta_{ij}\,
a_{l,i-j})_{i,j=1}^{n}$ with the bandwidth $b_{n}$ where
$a_{l,-n+1}, \cdots,a_{l,n-1}$ are
 complex numbers and $l=1,\ldots,p$, we have the trace formula
 \begin{equation} \label{basic:lem1}
 \mathrm{tr}( T_{1,n}\cdots T_{p,n})=\sum_{i=1}^{n}\,\sum_{J}
a_{J}\,I_{J} \ \large{\delta}_{0,\sum\limits_{l=1}^{p}j_{l}},\quad
\quad p\in \mathbb{N}.
 \end{equation}
  Here
$J=(j_{1},\ldots,j_{p})\in \{-b_{n},\ldots,b_{n}\}^{p}$,
$a_{J}=\prod^{p}_{l=1}a_{l,j_{l}}$,
$I_{J}=\prod^{p}_{k=1}\chi_{[1,n]}(i+\sum_{l=1}^{k}j_{l})$ and the
summation $\sum_{J}$ runs over all possibilities that $J \in
\{-b_{n},\ldots,b_{n}\}^{p}$.
\end{lemma}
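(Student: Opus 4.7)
The plan is to derive the formula by exploiting the shift-operator representation (\ref{basicrepresentation}) for each factor and then computing the trace of a product of shifts explicitly; the band constraint is absorbed into the range of the summation index.

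First, I would introduce a unified shift operator $S^j$ for $j\in\{-b_n,\ldots,b_n\}$ by setting $S^0=I_n$, $S^j=F^j$ for $j>0$ and $S^j=B^{-j}$ for $j<0$. These are exactly the matrices with entries $(S^j)_{kl}=\delta_{k-l,j}$, and on the standard basis they act as $S^j e_l=e_{l+j}$ when $l+j\in[1,n]$ and $S^j e_l=0$ otherwise. Each Toeplitz band matrix $T_{l,n}$ then reads
\begin{equation*}
T_{l,n}=\sum_{j_l=-b_n}^{b_n}a_{l,j_l}\,S^{j_l},
\end{equation*}
which is just (\ref{basicrepresentation}) applied to the $l$-th factor. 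Distributing the product by multilinearity gives
\begin{equation*}
T_{1,n}\cdots T_{p,n}=\sum_{J}a_J\,S^{j_1}S^{j_2}\cdots S^{j_p},
\end{equation*}
where $J=(j_1,\ldots,j_p)$ ranges over $\{-b_n,\ldots,b_n\}^p$. The band width $b_n$ is thereby encoded purely as the summation range, and $\eta_{ij}$ no longer plays an explicit role.

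Next I would evaluate the diagonal entries $(S^{j_1}\cdots S^{j_p})_{ii}$. Applying the shifts from right to left to $e_i$ produces $e_{i+j_p+j_{p-1}+\cdots+j_1}$, but only when every intermediate basis vector stays inside $\{e_1,\ldots,e_n\}$. Hence
\begin{equation*}
(S^{j_1}\cdots S^{j_p})_{ii}=\delta_{0,\sum_l j_l}\;\prod_{k=1}^{p}\chi_{[1,n]}\!\Big(i+\sum_{l=k}^{p}j_l\Big),
\end{equation*}
since returning to position $i$ forces the total shift to vanish. A cosmetic reindexing $j_l\mapsto j_{p+1-l}$ (equivalently, writing partial sums from the left after reversing the order of indices) turns the partial sums above into the $\sum_{l=1}^{k}j_l$ appearing in the statement; when $\sum_l j_l=0$ the ``full'' indicator $\chi_{[1,n]}(i)$ is automatic because $i$ runs over $[1,n]$, so nothing is lost by including $k=p$ in the product $I_J$. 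Summing over $i=1,\ldots,n$ and the $J$'s then yields (\ref{basic:lem1}).

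This is really a bookkeeping argument rather than a genuine obstacle: the only care needed is to track the order of the shifts and the orientation of the partial sums so that $I_J$ comes out as stated. A minor sign/convention ambiguity between $i+\sum j_l$ and $i-\sum j_l$ can always be absorbed by the change of variables $j_l\mapsto -j_l$, which preserves the symmetric summation range $\{-b_n,\ldots,b_n\}^p$ and merely relabels $a_J$, so it does not affect the identity as an algebraic formula valid for \emph{any} complex coefficients $a_{l,j}$. That generality is in fact the feature that makes the lemma useful in the sequel, because one will later substitute either the random entries of $T_n$ itself or those of related models (Hankel, products, sparse versions) without having to redo the combinatorics.
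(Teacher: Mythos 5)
Your setup---expanding each factor as $T_{l,n}=\sum_{j_l}a_{l,j_l}S^{j_l}$ and reading off the diagonal entries of $S^{j_1}\cdots S^{j_p}$ from the action on the standard basis---is exactly the paper's argument, and everything up to $(S^{j_1}\cdots S^{j_p})_{ii}=\delta_{0,\sum_l j_l}\prod_{k=1}^p\chi_{[1,n]}\bigl(i+\sum_{l=k}^p j_l\bigr)$ is correct. The gap is in the passage from these right-anchored partial sums to the left-anchored sums $\sum_{l=1}^k j_l$ of the statement. The substitution $j_l\mapsto j_{p+1-l}$ is not cosmetic: it turns $a_J=\prod_l a_{l,j_l}$ into $\prod_l a_{l,j_{p+1-l}}$, i.e.\ it permutes the matrix labels, and carried out literally it proves the trace formula for the reversed product $T_{p,n}\cdots T_{1,n}$, which for $p\geq 3$ and distinct factors is a different quantity. (The same objection applies to your closing remark that $j_l\mapsto -j_l$ ``merely relabels $a_J$'': the lemma is a specific polynomial identity in the $a_{l,j}$, and that relabeling converts it into the identity for the transposed matrices.) The discrepancy is real at the level of individual diagonal entries: for $n=2$, $p=2$, $i=1$ one has $(T_{1,n}T_{2,n})_{11}=a_{1,0}a_{2,0}+a_{1,-1}a_{2,1}$, whereas $\sum_J a_J\,\chi_{[1,2]}(1+j_1)\chi_{[1,2]}(1+j_1+j_2)\,\delta_{0,j_1+j_2}=a_{1,0}a_{2,0}+a_{1,1}a_{2,-1}$. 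So the stated $I_J$ is only correct after summing over $i$, and no relabeling of the $j$'s alone can bridge that; the paper's own one-line proof is equally terse on this point, but the issue must be addressed.

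The repair is short. On the support of $\delta_{0,\sum_l j_l}$ one has $\sum_{l=k}^p j_l=-\sum_{l=1}^{k-1}j_l$, so your indicator product equals $\prod_{k=1}^p\chi_{[1,n]}\bigl(i-\sum_{l=1}^k j_l\bigr)$ (the omitted factor for $k-1=0$ and the new factor for $k=p$ both reduce to $\chi_{[1,n]}(i)=1$). Now apply the reflection $i\mapsto n+1-i$ of $\{1,\ldots,n\}$ in the outer sum: since $i-s\in[1,n]$ if and only if $(n+1-i)+s\in[1,n]$, this converts $\sum_{i=1}^n\prod_{k}\chi_{[1,n]}\bigl(i-\sum_{l=1}^k j_l\bigr)$ into $\sum_{i=1}^n\prod_{k}\chi_{[1,n]}\bigl(i+\sum_{l=1}^k j_l\bigr)$ without touching $a_J$. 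With that one line inserted your proof is complete and coincides with the paper's.
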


\begin{proof}
For the standard basis $\{e_{1}, \cdots, e_{n}\}$ of the Euclidean
space $\mathbb{R}^{n}$, we have
\[T_{p,n}\, e_{i}=\sum_{j=0}^{b_{n}}a_{p,-j}
B^{j}\,e_{i}+ \sum_{j=1}^{b_{n}}a_{p,j} F^{j}\,e_{i}=
\sum_{j=-b_{n}}^{b_{n}}a_{p,j}\, \chi_{[1,n]}(i+j)\, e_{i+j}.\]
Repeating $T_{l,n}$'s effect on the basis, we have
\[T_{1,n}\cdots T_{p,n}\, e_{i}=
\sum_{j_{1},\cdots,j_{p}=-b_{n}}^{b_{n}}
\prod_{l=1}^{p}a_{l,j_{l}}\prod_{k=1}^{p}\chi_{[1,n]}(i+\sum_{l=1}^{k}j_{l})
\,e_{i+\sum\limits_{l=1}^{p}j_{l}}.\
\]
By $\mbox{tr}(T_{1,n}\cdots
T_{p,n})=\sum\limits_{i=1}^{n}e^{t}_{i}\, T_{1,n}\cdots T_{p,n}
\,e_{i}$, we complete the proof.
\end{proof}

%

We will mainly use the above trace formula in the case where
$T_{1,n}=\cdots= T_{p,n}=T_{n}$. Since $a_{0}\equiv 0$, from
Kronecker delta symbol in the trace formula of (\ref{basic:lem1}),
it suffices to consider these $J=(j_{1},\ldots,j_{p})\in \{\pm
1,\ldots,\pm b_{n}\}^{p}$ with the addition of
$\sum\limits_{k=1}^{p}j_{k}=0$. We remark that $a_{0}\equiv 0$ is
not necessary to Theorem \ref{limitmoment}. In fact it is sufficient
to ensure Theorem \ref{limitmoment} if all finite moments of random
variable $a_{0}$ exist and its expectation is zero.

\begin{definition} Let $J=(j_{1},\ldots,j_{p})\in \{\pm 1,\ldots,\pm b_{n}\}^{p}$, we
say $J$ is balanced if $\sum\limits_{k=1}^{p}j_{k}=0$.  The
component $j_{u}$ of $J$ is said to be coincident with $j_{v}$ if
$|j_{u}|=|j_{v}|$ for $1\leq u \neq v \leq p$.
\end{definition}

For  $J\in \{\pm 1,\ldots,\pm b_{n}\}^{p}$, we construct a set of
numbers with multiplicities
\be{\label{projection}S_{J}=\{|j_{1}|,\ldots,|j_{p}|\}}.\ee We call
$S_{J}$ the \emph{projection} of $J$.
\par
The balanced $J$'s can be classified into three categories.

Category 1 (denoted by $\Gamma_{1}(p)$): $J$ is said to belong to
category 1 if each of its components is coincident with exactly one
other component of the opposite sign. It is obvious that
$\Gamma_{1}(p)$ is an empty set when $p$ is odd.

Category 2 ($\Gamma_{2}(p)$) consists of all those vectors such that
$S_{J}$ has at least one number with multiplicity 1.

Category 3 ($\Gamma_{3}(p)$) consists of all other  balanced vectors
in $\{\pm 1,\ldots,\pm b_{n}\}^{p}$. For $J  \in \Gamma_{3}(p)$,
either $S_{J}$ has one number of at least 3 multiplicity, or each of
$S_{J}$ has multiplicity 2 but at least two of the components are
the same, which are denoted respectively by $\Gamma_{31}(p)$ and
$\Gamma_{32}(p)$.

We are now ready to prove Theorem \ref{limitmoment}.

\begin{proof}[Proof of Theorem \ref{limitmoment}]  By Lemma \ref{toeplitzlemma}, we have
\begin{equation}
 \mathbb{E}[\frac{1}{n} \mathrm{tr} (A_{n}^{2k})]=\frac{1}{n b_{n}^{k}}
 \sum_{i=1}^{n}\,\sum_{J}
\mathbb{E}[a_{J}]\,I_{J} \
\large{\delta}_{0,\sum\limits_{l=1}^{2k}j_{l}}=\sum\nolimits_{1}+\sum\nolimits_{2}+\sum\nolimits_{3},
\end{equation}
where\be{\sum\nolimits_{l}=\frac{1}{n b_{n}^{k}}
 \sum_{i=1}^{n}\,\sum_{J\in \Gamma_{l}(2k)}
a_{J}\,I_{J} },\ \ \  l=1,2,3.\ee

By the definition of the categories and the assumptions on the
entries of the random matrices, we obtain
$$\sum\nolimits_{2}=0.$$

Next, we divide $\sum\nolimits_{3}$ into two parts
$$\sum\nolimits_{3}=\sum\nolimits_{31}+\sum\nolimits_{32},$$
where\be{\sum\nolimits_{3l}=\frac{1}{n b_{n}^{k}}
 \sum_{i=1}^{n}\,\sum_{J\in \Gamma_{3l}(2k)}
a_{J}\,I_{J} },\ \ \  l=1,2.\ee For $J\in \Gamma_{3}(2k)$, we denote
the number of distinct elements of $S_{J}$ by $t$. By the definition
of the category, we have $t\leq k$. Note that the random variables
whose subscripts have different absolute values are independent.
Once we have specified the distinct numbers of $S_{J}$, the
subscripts $j_{1},\cdots,j_{2k}$ are determined in at most
$2^{2k}k^{2k}$ ways. If $J\in \Gamma_{31}(2k)$, then $t\leq
\frac{2k-1}{2}$. Again by independence and the assumptions on the
matrix elements
 (\ref{symmetrictoeplitz2}), we find
\[|\sum\nolimits_{31}|\leq \frac{1}{n b_{n}^{k}}
 \sum_{i=1}^{n}O(b_{n}^{\frac{2k-1}{2}})=o(1).\]

When  $J\in \Gamma_{32}(2k)$, there exist $p_{0}, q_{0}\in [2k]$
such that
$$j_{p_{0}}=j_{q_{0}}=\frac{1}{2}(j_{p_{0}}+j_{q_{0}}-\sum\limits_{q=1}^{2k}j_{q}).$$
We can choose the other at most $k-1$  distinct numbers, which
determine $j_{p_{0}}=j_{q_{0}}$. This shows that there is a loss of
at least
 one degree of freedom, thus the contribution of such terms is $O(n^{-1})$, i.e.
\[|\sum\nolimits_{32}|=o(1).\]

Since the main contribution comes from the category 1, each term
$\mathbb{E}[a_{J}]=1$ for $J\in \Gamma_{1}(2k)$.
   So we can rewrite \begin{equation}
\label{integral:sum}
\mathbb{E}[\frac{1}{n}\mathrm{tr}(A_{n}^{2k})]=o(1)+ \frac{1}{n
b_{n}^{k}}\sum_{\pi \in \mathcal{P}_{2}(2k)
}\sum_{i=1}^{n}\,\sum_{j_{1},\cdots,j_{k}=-b_{n}}^{b_{n}}
\prod_{l=1}^{2k}\chi_{[1,n]}(i+\sum_{q=1}^{l}
\epsilon_{\pi}(q)j_{\pi(q)}). \end{equation}

For fixed $\pi \in \mathcal{P}_{2}(2k)$,
$$
\frac{1}{n
b_{n}^{k}}\sum_{i=1}^{n}\,\sum_{j_{1},\cdots,j_{k}=-b_{n}}^{b_{n}}
\prod_{l=1}^{2k}\chi_{[1,n]}(i+\sum_{q=1}^{l}
\epsilon_{\pi}(q)j_{\pi(q)}),$$ i.e.
$$
\frac{1}{n
b_{n}^{k}}\sum_{i=1}^{n}\,\sum_{j_{1},\cdots,j_{k}=-b_{n}}^{b_{n}}
\prod_{l=1}^{2k}\chi_{[\frac{1}{n},1]}(\frac{i}{n}+\sum_{q=1}^{l}
\epsilon_{\pi}(q)\frac{b_{n}}{n}\frac{j_{\pi(q)}}{b_{n}})$$
 can be considered as a
Riemann sum of the definite integral
$$\int_{[0,1]\times [-1,1]^{k}}\prod_{l=1}^{2k}I_{[0,1]}(x_{0}+b\sum_{q=1}^{l}
\epsilon_{\pi}(q)x_{\pi(q)}) \prod_{l=0}^{k}\mathrm{d}\, x_{l}.$$

As in the above arguments, by Lemma \ref{toeplitzlemma} and the
assumptions on the matrix elements
 (\ref{symmetrictoeplitz2}), we have \be{\left|\mathbb{E}[\frac{1}{n}\mathrm{tr}(A_{n}^{2k+1})]\right|\leq
 \frac{1}{n b_{n}^{\frac{2k+1}{2}}}
 \sum_{i=1}^{n}\,O(b_{n}^{k})=o(1)}\ee since
$\mathcal{P}_{2}(2k-1)=\emptyset$.

This completes the proof.
\end{proof}


\begin{remark}When $b=0$, we can easily get $M_{2k}=2^{k}(2k-1)!!$. This
is just the $2k$- moment of the  normal distribution with variance
2, which is also obtained independently by   Basak and  Bose
\cite{bb} and  Kargin \cite{kargin}. However, for $b>0$ it is quite
difficult to calculate $M_{2k}$ because the integrals in the sum of
(\ref{bandtoeplitz:moment}) are not all the same for different
partitions $\pi$'s, some of which are too hard to evaluate.
\end{remark}

\section{covariance}\label{variance}
\setcounter{equation}{0} In this section we evaluate the covariance
of $\omega_{p}$ and $\omega_{q}$. Recall
\be{\omega_{p}=\frac{\sqrt{b_{n}}}{n}\left(\mathrm{tr}(A_{n}^{p})-\mathbb{E}[\mathrm{tr}(A_{n}^{p})]\right)}
=\frac{1}{n b_{n}^{\frac{p-1}{2}}}
 \sum_{i=1}^{n}\,\sum_{J}I_{J}\left(a_{J}-
\mathbb{E}[a_{J}]\right) \
\large{\delta}_{0,\sum\limits_{l=1}^{p}j_{l}},\ee thus
\begin{align}
 \mathbb{E}[\omega_{p}\,\omega_{q}]&=\frac{1}{n^{2} b_{n}^{\frac{p+q}{2}-1}}
 \sum_{i,i'}\,\sum_{J,J'}I_{J}\,I_{J'}
\mathbb{E}[\left(a_{J}- \mathbb{E}[a_{J}]\right)\left(a_{J'}-
\mathbb{E}[a_{J'}]\right)]\nonumber\\
&=\frac{1}{n^{2} b_{n}^{\frac{p+q}{2}-1}}
 \sum_{i,i'}\,\sum_{J,J'}I_{J}\,I_{J'}
\left(\mathbb{E}[a_{J}a_{J'}]-
\mathbb{E}[a_{J}]\mathbb{E}[a_{J'}]\right),\label{covariance}
\end{align}
where the summation $\sum\limits_{J,J'}$ runs over all 
 balanced vectors  $J\in \{\pm 1,\ldots,\pm b_{n}\}^{p}$
and $J'\in \{\pm 1,\ldots,\pm b_{n}\}^{q}$. 

The main result of this section can be stated as follows:
\begin{theorem}\label{covariancecalculation}Using the notations in section
\ref{Integrals associated with pair partitions}, for fixed $p,\
q\geq 2$,  as $n\longrightarrow \infty$ we have
\begin{align}
 \mathbb{E}[\omega_{p}\,\omega_{q}]\longrightarrow \sigma_{p,q}=
\sum_{\pi\in \mathcal{P}_{2
}(p,q)}\left(f^{-}_{I}(\pi)+f^{+}_{I}(\pi)\right)+(\kappa-1)\sum_{\pi\in
\mathcal{P}_{2,4}(p,q)}\left(f^{-}_{II}(\pi)+f^{+}_{II}(\pi)\right)
\end{align}
when $p+q$ is even and
\begin{align}
 \mathbb{E}[\omega_{p}\,\omega_{q}]=o(1)
\end{align}
when $p+q$ is odd.
\end{theorem}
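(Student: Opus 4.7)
The strategy parallels the category analysis in the proof of Theorem \ref{limitmoment}, now applied to the concatenated index vector $(j_1,\ldots,j_p,j'_1,\ldots,j'_q)\in\{\pm 1,\ldots,\pm b_n\}^{p+q}$. Starting from \eqref{covariance}, the key observation is that, by independence of $\{a_j\}_{j\ge 1}$, the centered cumulant $\mathbb{E}[a_J a_{J'}]-\mathbb{E}[a_J]\mathbb{E}[a_{J'}]$ vanishes unless there is at least one \emph{crossing coincidence} $|j_u|=|j'_v|$ with $u\in[p]$, $v\in[q]$. I would therefore restrict the joint sum to such pairs.

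Next I would classify the surviving balanced pairs $(J,J')$ into analogues of $\Gamma_1,\Gamma_2,\Gamma_3$ applied to the length-$(p+q)$ vector. The $\Gamma_2$-type terms (some $|j|$ of multiplicity $1$) vanish by centering and independence; the $\Gamma_3$-type terms (some $|j|$ of multiplicity $\ge 3$, or two equal components within a pair) are $o(1)$ by the same degree-of-freedom count as in section \ref{expectation}, once compared against the normalization $n^{-2}b_n^{-(p+q)/2+1}$.

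The leading Gaussian-type contribution then comes from pair partitions $\pi$ of $[p+q]$ with every block of size $2$. Among these, partitions with no crossing block (all blocks lying in $[p]$ or in $\{p+1,\ldots,p+q\}$) render $J$ and $J'$ independent, so the subtraction $\mathbb{E}[a_J a_{J'}]-\mathbb{E}[a_J]\mathbb{E}[a_{J'}]$ kills them exactly. The remaining pair partitions are precisely $\mathcal{P}_2(p,q)$. For each $\pi\in\mathcal{P}_2(p,q)$, every crossing pair admits two sign realizations ($j_u=j'_{v-p}$ or $j_u=-j'_{v-p}$), producing two Riemann sums in the rescaled variables $i/n$, $i'/n$, $j_\ell/b_n$; they converge respectively to $f_I^-(\pi)$ and $f_I^+(\pi)$. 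The half-balance constraints $\sum_{l=1}^p j_l=0$ and $\sum_{l=1}^{q}j'_l=0$ survive in the limit as the Dirac factor $\delta(\sum_{i=1}^p y_i)$ appearing in the integrand.

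The $(\kappa-1)$ correction comes from $\mathcal{P}_{2,4}(p,q)$: one block of size $4$ with two entries from each half, all other blocks being non-crossing pairs. For such $\pi$, the $4$-block in $\mathbb{E}[a_J a_{J'}]$ contributes the fourth moment $\kappa$, whereas in $\mathbb{E}[a_J]\mathbb{E}[a_{J'}]$ the same four indices factor as two independent pairs (one on each side), each contributing a second moment equal to $1$; the subtraction yields precisely the prefactor $\kappa-1$. The $\tau_\pi$ sign convention on the $4$-block makes the contribution to $\sum_{i=1}^p y_i$ and to $\sum_{i=p+1}^{p+q} y_i$ cancel automatically, which explains why no Dirac factor appears in $f_{II}^\pm$; the two sign realizations on the $4$-block then give the two Riemann sums converging to $f_{II}^-(\pi)$ and $f_{II}^+(\pi)$. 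Partitions with a mixed block of size $\ge 5$, two separate $4$-blocks, or a $4$-block internal to a single half are shown to be $o(1)$ by the same degree-of-freedom argument, and the assumption \eqref{symmetrictoeplitz2} controls the higher moments. Finally, when $p+q$ is odd, $\mathcal{P}_2(p+q)$ and $\mathcal{P}_{2,4}(p,q)$ are empty, and every other category is $o(1)$, giving $\mathbb{E}[\omega_p\omega_q]=o(1)$.

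The main obstacle I anticipate is the precise prefactor matching: verifying that, for each surviving partition type, the half-balance constraints reduce the free subscripts by exactly the right power of $b_n$ so that $n^{-2}b_n^{-(p+q)/2+1}$ is the correct Riemann normalization, and cleanly isolating the $(\kappa-1)$ contribution of $\mathcal{P}_{2,4}(p,q)$ without double-counting against the nearby configurations in $\mathcal{P}_2(p,q)$ or against $\Gamma_3$-type remainders.
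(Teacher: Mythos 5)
Your identification of the leading-order structure is correct and matches the paper's: the centering kills non-crossing pair partitions, $\mathcal{P}_2(p,q)$ contributes with weight $1$ through the two coherent sign realizations (giving $f_I^-$ and $f_I^+$), $\mathcal{P}_{2,4}(p,q)$ contributes the factor $\kappa-1$ exactly as you compute it, and the odd case follows from parity. But there is a genuine gap in your error analysis, and it is precisely the issue you flag at the end without resolving. You claim the $\Gamma_3$-type terms for the concatenated length-$(p+q)$ vector are ``$o(1)$ by the same degree-of-freedom count as in section \ref{expectation}.'' That count does not suffice here: a configuration in which every absolute value has multiplicity $\geq 2$ and at least one has multiplicity $\geq 3$ still has up to $\frac{p+q-1}{2}$ distinct absolute values, whereas the normalization $b_n^{-\frac{p+q}{2}+1}$ only tolerates $\frac{p+q}{2}-1=\frac{p+q-2}{2}$ degrees of freedom. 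So the section-\ref{expectation} bound, applied verbatim to the concatenated vector, gives a \emph{divergent} estimate for these terms. Indeed your own $\mathcal{P}_{2,4}(p,q)$ configurations are of $\Gamma_3$ type (an absolute value of multiplicity $4$) and survive at main order, so your plan first discards a class and then extracts a nonzero limit from a subclass of it. To close the gap one must exploit that $J$ and $J'$ are \emph{separately} balanced, i.e.\ two linear constraints rather than one, and show case by case which multiplicity patterns make at least one of them a genuinely dimension-reducing constraint.

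This is exactly what the paper's proof is built to handle, and it is the one structural difference from your route: instead of classifying the concatenated vector directly, the paper performs a \emph{reduction} step $L=J\bigvee_{|j_u|}J'$ at the first joint point, splicing $J'$ into $J$ to produce a single balanced vector of dimension $p+q-2$ with at most $2pq$ pre-images (a bound independent of $n$). The single-vector category analysis of section \ref{expectation} then applies at the correct order $b_n^{\frac{p+q-2}{2}}=b_n^{\frac{p+q}{2}-1}$: the main term is $L\in\Gamma_1(p+q-2)$, the possible single-multiplicity element created by the reduction is determined by balance and costs $O(b_n^{-1/2})$, and the dichotomy ``$j_u$ coincident with a component of $L$ or not'' cleanly separates the $\mathcal{P}_{2,4}$ weight $\kappa-1$ from the $\mathcal{P}_2$ weight $1$ without double counting. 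A secondary imprecision in your write-up: with $c\geq 2$ crossing blocks there are $2^c$ sign patterns, not two per block summed independently; only the two globally coherent patterns (all crossing matches of opposite sign, or all of the same sign) keep the two half-balance constraints proportional, and all mixed patterns impose two independent constraints and are suppressed. Your limits and the final formula are right, but as written the proposal does not constitute a proof of the $o(1)$ estimates.
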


When $p=q$ we denote $\sigma_{p,q}$ by $\sigma^{2}_{p}$, where
$\sigma_{p}$ denotes the standard deviation. From the above theorem,
we can obtain the variance of $\omega_{Q}$ in Theorem \ref{tpg}
\be{\sigma^{2}_{Q}=\sum_{i=2}^{p}\sum_{j=2}^{p}q_{i}q_{j}
\sigma_{i,j}}.\ee

By the independence of matrix entries, the only non-zero terms in
the sum of (\ref{covariance}) come from pairs of balanced vectors
$J=(j_{1},\ldots,j_{p})$ and $J'=(j'_{1},\ldots,j'_{q})$ such that

(i) The \emph{projections}  $S_{J}$ and $S_{J'}$ of $J$ and $J'$
have at least one element in common;

(ii) Each number in the union of $S_{J}$ and $S_{J'}$ occurs at
least two times.

\begin{definition}Any pair of balanced vectors
$J=(j_{1},\ldots,j_{p})$ and $J'=(j_{1}',\ldots,j_{q}')$ satisfying
(i) is called correlated. If $j_u \in J$($j_u$ is a component of
$J$) and $|j_u| \in S_J \bigcap S_{J'}$, then $j_u$ is called a
joint point of the ordered correlated pair.
\end{definition}

To observe which correlated pairs lead to the main contribution to
the covariance, we next construct a balanced vector of dimension
($p+q-2$) from each correlated pair $J$ of dimension $p$ and $J'$ of
dimension $q$. Although the corresponding map of correlated pairs to
such balanced vectors is not one to one, the number of pre-images
for a balanced vector of dimension ($p+q-2$) is finite (only
depending on $p$ and $q$). We will study the resulting balanced
vectors in a similar way in section \ref{expectation}.

\begin{proof}[Proof of Theorem \ref{covariancecalculation}]
Let us first  construct a map from the ordered correlated pair $J$
and $J'$ as follows. Let $j_{u}\in J$ be the first joint point(whose
subscript is the smallest) of the ordered correlated pair $J$ and $J'$,
and let $j'_v$ be the first element in $J'$ such that
$|j_{u}|=|j'_{v}|$. If $j_{u}=-j'_{v}$, we construct a vector
$L=(l_{1},\ldots,l_{p+q-2})$ such that
\begin{align}l_{1}=j_{1},\ldots,&l_{u-1}=j_{u-1},
l_{u}=j'_{1},\ldots,l_{u+v-2}=j'_{v-1},\nonumber\\
&l_{u+v-1}=j'_{v+1}, \ldots,l_{u+q-2}=j'_{q},
l_{u+q-1}=j_{u+1},\ldots l_{p+q-2}=j_{p}.\nonumber
\end{align}
It is obvious that $$\sum\limits_{k=1}^{p+q-2}l_{k}=0,$$ so $L$ is
balanced. If $j_{u}=j'_{v}$, then from $J$ and
$-J'=(-j'_{1},\ldots,-j'_{q})$ we proceed as in the  way above. We
call this process of constructing $L$ from $J$ and $J'$ a
$reduction$ step and denote it by $L=J{\bigvee}_{|j_u|}J'$.
\begin{remark}
From the construction above, for any joint point of $J$ and $J'$ a
reduction step can be done in the same way. Given $\theta \in
S_J\bigcap S_{J'}$, when saying $J{\bigvee}_{\theta}J'$, we mean
that there exists certain joint point of $J$ and $J'$ $j_u$
satisfying $|j_u|=\theta$ and $J{\bigvee}_{\theta}J'$ is the vector
after this reduction step. In this section, $j_u$ is always the
first joint point. While in section \ref{highermoments}, $j_u$ may
denote other joint points which is clear in the context.
\end{remark}
\par
Notice that the reduction might cause the appearance of one number
with multiplicity 1 in $S_{L}$, although each number in the union of
$S_{J}$ and $S_{J'}$ occurs at least two times. If so, the resulting
 number with multiplicity 1 in $S_{L}$ must be coincident with the joint point
 $j_{u}$. In addition, to estimate which terms lead to main contribution to higher moments of $\mathrm{tr}(A_n^p)$, we will use the $reduction$ steps and mark the appearance of the numbers with
multiplicity 1 in section \ref{highermoments}.

 Next, we assume we have a balanced vector $L$ of dimension ($p+q-2$), we shall estimate in how many different
 ways  it can be obtained from correlated pairs of dimensions $p$ and $q$. First, we have to choose
 some component
 $l_{u}$ in the first half of the vector, $1\leq u \leq p$ such that
 \be{\label{premagecondition1}\left| \sum\limits_{i=u}^{u+q-2}l_{i}\right|\neq |l_{j}|,\ \ j=1,\ldots,u-1.}\ee
 Set $J=(j_{1},\ldots,j_{p})$ with
\be{j_{1}=l_{1},\ldots,j_{u-1}=l_{u-1},j_{u}=\sum\limits_{i=u}^{u+q-2}l_{i},j_{u+1}=l_{u+q-1},
\ldots,j_{p}=l_{p+q-2}. }\ee  We also  have to choose
 some component
 $l_{u+v-1}$, $1\leq v \leq q-1$ such that
 \be{\label{premagecondition2}\left| \sum\limits_{i=u}^{u+q-2}l_{i}\right|\neq |l_{j}|,\ \ j=u,\ldots,u+v-2}\ee whenever $v\geq 2$.
  Set $J'=(j'_{1},\ldots,j'_{q})$ with
\be{j'_{1}=l_{u},\ldots,j'_{v-1}=l_{u+v-2},j'_{v}=-\sum\limits_{i=u}^{u+q-2}l_{i},j'_{v+1}=l_{u+v-1},
\ldots,j'_{p}=l_{u+q-2}.}\ee If $j_{u}$ is the joint point of the
constructed correlated pair $J$ and $J'$ and $j'_v$ is the
corresponding element in $J'$, then the pair $\{J,J'\}$ or
$\{J,-J'\}$ is the pre-image of $L$. Note that since when $u=v=1$
the conditions (\ref{premagecondition1}) and
(\ref{premagecondition2}) are satisfied, the pre-image of $L$ always
exists. A simple estimation shows that the number of pre-images of
$L$ is at most $2pq$, not depending on $n$ (we will see this fact
plays an important role in the estimation of higher moments in
section \ref{highermoments}).

Since there is at most one element with  multiplicity 1 in $S_{L}$,
if there is one number with multiplicity 1, then the number will be
determined by others because of the balance of $L$. Consequently,
the degree of freedom for such terms is at most $\frac{p+q-2-1}{2}$.
Therefore, the sum of these terms will be $O(b^{-1/2}_{n})$, which
can be omitted. Now we suppose each number in $S_{L}$ occurs at
least two times. Recall the procedure in section \ref{expectation},
and we know that the main contribution to the covariance
(\ref{covariance}) comes from the $L\in \Gamma_{1}(p+q-2)$, which
implies $
 \mathbb{E}[\omega_{p}\,\omega_{q}]=o(1)$
when $p+q$ is odd. When $p+q$ is even, for $L\in \Gamma_{1}(p+q-2)$
the weight \be{\mathbb{E}[a_{J}a_{J^{'}}]-
\mathbb{E}[a_{J}]\mathbb{E}[a_{J^{'}}]=\mathbb{E}[\prod^{p}_{s=1}a_{j_{s}}\prod^{q}_{t=1}a_{j'_{t}}]-
\mathbb{E}[\prod^{p}_{s=1}a_{j_{s}}]\mathbb{E}[\prod^{q}_{t=1}a_{j'_{t}}]
}\ee equals to 1 if $j_{u}$ is not coincident with any component of
$L$; otherwise the weight is either
$\mathbb{E}[|a_{j_{u}}|^{4}]=\kappa$ or
$\mathbb{E}[|a_{j_{u}}|^{4}]-\left(\mathbb{E}[|a_{j_{u}}|^{2}]\right)^{2}=\kappa-1$.

So far  we have found such terms leading to the main contribution,
now we calculate the variance. Based on whether or not the fourth
moment appears, we evaluate the covariance. If the fourth moment
doesn't appear, then $j_{1},\ldots,j_{p}, j'_{1},\ldots,j'_{q}$
match in pairs. In the abstract, by their subscripts  they can be
treated as pair partitions of $\{1, 2, \ldots, p, p+1, \ldots,
p+q\}$ but with at least one crossing match (i.e.,
$\mathcal{P}_{2}(p,q)$ as in section \ref{Integrals associated with
pair partitions}). Thus, for every $\pi \in \mathcal{P}_{2}(p,q)$,
the summation can be a Riemann sum and its limit becomes
$f^{-}_{I}(\pi)$ ( it is $f^{+}_{I}(\pi)$
 when the first
 coincident components in $J$ and $J'$   have the  same
 sign).  On the other hand,   if the fourth moment does  appear,
 then
 $j_{1},\ldots,j_{p}, j'_{1},\ldots,j'_{q}$ match in pairs except that
 there exist a block with four elements. Therefore, from the balance of $\sum\limits_{k=1}^{p}j_{k}=0$
 and $\sum\limits_{k=1}^{p}j'_{k}=0$, we know that the main contribution must come from such partitions:
$j_{1},\ldots,j_{p}$ and  $j'_{1},\ldots,j'_{q}$ both form pair
partitions; the block with four elements take respectively from a
pair of $j_{1},\ldots,j_{p}$ and  $j'_{1},\ldots,j'_{q}$. Otherwise,
the degree of freedom decreases by at least one. Similarly, for
every $\pi \in \mathcal{P}_{2,4}(p,q)$, the corresponding summation
can be a Riemann sum and its limit becomes $f^{-}_{II}(\pi)$ (it is
$f^{+}_{II}(\pi)$
 when the first
 coincident components in $J$ and $J'$   have the  same
 sign).


In a similar way as in section \ref{expectation}, noting that the
 coincident components in $J$ and $J'$ may have the same or opposite
 sign, we conclude
   with 
   the
notations in section \ref{Integrals associated with pair partitions}
that
\begin{align}
 \mathbb{E}[\omega_{p}\,\omega_{q}]\longrightarrow
\sum_{\pi\in \mathcal{P}_{2
}(p,q)}\left(f^{-}_{I}(\pi)+f^{+}_{I}(\pi)\right)+(\kappa-1)\sum_{\pi\in
\mathcal{P}_{2,4}(p,q)}\left(f^{-}_{II}(\pi)+f^{+}_{II}(\pi)\right)
\end{align}
as $n\longrightarrow \infty$.

This completes the proof.
\end{proof}

\section{Higher Moments of $\mathrm{tr}(A_n^p)$}\label{highermoments}
\setcounter{equation}{0} Let $\mathfrak{B}_{n,p}$ denote the set of
all balanced vectors $J=(j_{1},\ldots,j_{p})\in \{\pm 1,\ldots,\pm
b_{n}\}^{p}$. Let $\mathfrak{B}_{n,p,i}$ ($1\leq i \leq n$) be a
subset of $\mathfrak{B}_{n,p}$ such that $J \in
\mathfrak{B}_{n,p,i}$ if and only if
\[\forall\,t \in \{1,\ldots,p\},\quad 1\leq i+\sum_{q=1}^{t}j_{q} \leq
n.
\]
With these notations, Lemma \ref{toeplitzlemma} could be rewritten
as \be \label{trformula}
\mathrm{tr}(T_n^{p})=\sum_{i=1}^{n}\,\sum_{J \in
\mathfrak{B}_{n,p,i}} a_{J}.\ee
\\
\par
To finish the proof of Theorem \ref{tpg}, it is sufficient to show that  given $ p_{1}, p_{2}, \cdots, p_{l}\geq 2$ and $l\geq 1$, as $n \rightarrow \infty$ we have
\begin{equation}\label{jointmoments}\mathbb{E}[\omega_{p_{1}}\omega_{p_{2}}\cdots \omega_{p_{l}}]\longrightarrow \mathbb{E}[g_{p_{1}}g_{p_{2}}\cdots g_{p_{l}}],
\end{equation}
where $\{g_{p}\}_{p \geq 2}$ is a Gaussian family with covariances $\sigma_{p,q}=\mathbb{E}[{g}_{p}g_{q}]$.

Then a CLT for
\be{\omega_{Q}=\frac{\sqrt{b_{n}}}{n}\left(\mathrm{tr}Q(A_{n})-\mathbb{E}[\mathrm{tr}Q(A_{n})]\right)}\ee
follows, with the variance
\be{\sigma^{2}_{Q}=\sum_{i=2}^{p}\sum_{j=2}^{p}q_{i}q_{j}
\sigma_{i,j}}.\ee

 The main idea is
rather straightforward: in an  analogous way to the one used in Eq.
(\ref{covariance}), we will deal with
\begin{align}\label{fluctuation
formula}\mathbb{E}&[\omega^{k_{1}}_{p_{1}}\cdots \omega^{k_{l}}_{p_{l}}]=\nonumber\\
&n^{-l} \cdot
b_n^{-\frac{p_1+\cdots+p_l-l}{2}}
\sum_{i_1,\dots,i_l=1}^{n}\,\sum_{J_1 \in
\mathfrak{B}_{n,p_1,i_1},\dots,J_l \in
\mathfrak{B}_{n,p_l,i_l}}\prod_{t=1}^{l}I_{J_{t}}\mathbb{E}\left[
\prod_{t=1}^{l} (a_{J_t}-\mathbb{E}[a_{J_t}])\right].\end{align}

Remember that two balanced vectors $J=(j_{1},\ldots,j_{p})$ and
$J'=(j_{1}',\ldots,j_{q}')$ are called correlated if the
corresponding  \emph{projections} $S_{J}$ and $S_{J'}$ of $J$ and
$J'$ have at least one element in common.

\begin{definition}
Given a set of balanced vectors $\{J_1,J_2,\ldots,J_l\}$, a subset
of balanced vectors $J_{m_{j_1}}$,$J_{m_{j_2}}$,\ldots,$J_{m_{j_t}}$
is called a cluster if \\
1)\quad for any pair $J_{m_i}$,$J_{m_j}$ from the subset one can
find a chain of vectors $J_{m_s}$, also belongs to the subsets,
which starts with $J_{m_{i}}$ ends with $J_{m_j}$, such that any two
neighboring vectors are correlated; \\
2)\quad the subset $J_{m_{j_1}}$,$J_{m_{j_2}}$,\ldots,$J_{m_{j_t}}$
cannot be enlarged with the preservation of 1).
\end{definition}
It is clear that the vectors corresponding to different clusters are
disjoint. By this reason the mathematical expectation in
(\ref{fluctuation formula}) decomposes into the product of
mathematical expectations corresponding to different clusters. We
shall show that the leading contribution to (\ref{fluctuation
formula}) comes from products where all clusters consist exactly of
two vectors, as is stated in Lemma \ref{cluster} below. 

\begin{lemma}\label{cluster}
Provided $l\geq 3$, we have  \be\label{eq:cluster}
n^{-l} \cdot
b_n^{-\frac{p_1+\cdots+p_l-l}{2}}
\sum_{i_1,\dots,i_l=1}^{n}\,\sum_{J_1 \in
\mathfrak{B}_{n,p_1,i_1},\dots,J_l \in
\mathfrak{B}_{n,p_l,i_l}}\prod_{t=1}^{l}I_{J_{t}} \mathbb{E}\left[
{\prod_{t=1}^{l}}^{\star} (a_{J_t}-\mathbb{E}[a_{J_t}])\right]=o(1)
\ee where the product ${\prod}^{\star}$ in (\ref{eq:cluster}) is
taken
over $l$ vectors which exactly form a $cluster$. 
\end{lemma}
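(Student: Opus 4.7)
The plan is to extend the \emph{reduction} technique of Section~\ref{variance} to a cluster of $l\geq 3$ vectors and to perform a degrees-of-freedom count showing that the contribution is of strictly smaller order than the $l=2$ covariance case.

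Since $J_{1},\ldots,J_{l}$ form a single cluster, the correlation graph on $\{1,\ldots,l\}$ (with an edge whenever $S_{J_{s}}\cap S_{J_{t}}\neq\emptyset$) is connected. Picking a spanning tree in it and iterating the reduction step along its $l-1$ edges, I will combine the cluster into a single balanced vector $L$ of dimension $D:=\sum_{t=1}^{l}p_{t}-2(l-1)$. As in the proof of Theorem~\ref{covariancecalculation}, any such $L$ has only $O(1)$ pre-images under this sequence of reductions, with the constant depending on $p_{1},\ldots,p_{l}$ alone. Classifying $L$ by its projection $S_{L}$ exactly as in Section~\ref{expectation}, the main contribution comes from $L\in\Gamma_{1}(D)$: an element of multiplicity $1$ in $S_{L}$ (which, as in the covariance argument, must be coincident with a joint point and is thereby determined by the balance of $L$) or of multiplicity $\geq 3$ costs at least a factor $O(b_{n}^{-1/2})$, and similarly the $\mathcal{P}_{2,4}$-type contributions involving $\kappa=\mathbb{E}[|a_{j}|^{4}]$ strictly decrease the number of free component values and fall under the same bound.

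For $L\in\Gamma_{1}(D)$, the $D/2=\sum p_{t}/2-(l-1)$ pair values in $\{\pm 1,\ldots,\pm b_{n}\}$ are subject to the $l$ balance constraints coming from $J_{1},\ldots,J_{l}$. The key observation is that in a pair partition each pair label appears with opposite $\epsilon_{\pi}$ signs, so summing the $l$ balances produces identically $0$, and only $l-1$ of them are independent. Together with the $O(n^{l})$ choices of $(i_{1},\ldots,i_{l})$, each ranging over an interval of length $O(n)$ determined by the indicators $I_{J_{t}}$, the number of contributing terms is $O(n^{l}b_{n}^{\sum p_{t}/2-l+1})$, giving
\[
\text{LHS of }(\ref{eq:cluster})=O\bigl(n^{-l}\cdot b_{n}^{-(\sum p_{t}-l)/2}\cdot n^{l}\cdot b_{n}^{\sum p_{t}/2-l+1}\bigr)=O\bigl(b_{n}^{\,1-l/2}\bigr)=o(1)\quad\text{for }l\geq 3.
\]
The main obstacle is the rank computation for the balance system: one must verify that the only linear dependence among the $l$ balance relations is the trivial sum $0=0$ produced by the signed pair structure, and that any configuration in which this rank drops further is either inconsistent with $J_{1},\ldots,J_{l}$ forming a single cluster or loses degrees of freedom elsewhere (e.g.\ through extra coincidences among pair labels), so that the overall bound $O(b_{n}^{1-l/2})$ is preserved.
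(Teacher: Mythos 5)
Your overall strategy (reduce the cluster and count degrees of freedom) is the right one, and your final arithmetic would indeed close the argument \emph{if} the cluster could always be collapsed to a single balanced vector. But that is precisely the step that fails, and it is the reason the paper's proof of Lemma \ref{general case} is as long as it is. A reduction step $J\bigvee_{\theta}J'$ consumes one copy of $\theta$ from each of $S_{J}$ and $S_{J'}$, so after reducing along one edge of your spanning tree the joint point needed for a later edge may no longer be present, and the merged vector can become uncorrelated with the remaining vectors. Concretely, take $l=3$ with $p_{1}=p_{2}=p_{3}=3$, a value $\theta$ occurring exactly once in each of $S_{J_{1}},S_{J_{2}},S_{J_{3}}$ and being their only common element, the remaining components pairing up as $|\alpha|=|\gamma|$, $|\beta|=|\delta|$ inside $J_{1}\cup J_{2}$ and $|\epsilon|=|\zeta|$ inside $J_{3}$. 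All hypotheses of the lemma hold, but after $J_{1}\bigvee_{\theta}J_{2}$ the value $\theta$ disappears and the second reduction of your tree cannot be performed; you are left with two vectors, not one, and the ``$O(1)$ pre-images of a single $L\in\Gamma_{1}(D)$'' bookkeeping no longer applies. The paper handles exactly this obstruction: it reduces only as long as the cluster structure is preserved, and then runs a separate counting argument on the irreducible configuration $J'_{1},\ldots,J'_{l'}$ with $l'>1$, via the dominating-set construction (Facts 1--3 and the cases $h>l'$, $h=l'$ with $t>1$, and $h=l'$ with $t=1$). Notably, the subcase where a single value appears exactly once in every $S_{J'_{i}}$ (Case III) is where the hypothesis $a_{0}\equiv 0$, i.e.\ condition (iii), is genuinely needed; your proposal never touches this, so it cannot explain why that assumption enters.

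A second, related gap is the rank claim you flag as ``the main obstacle'' but do not resolve. The assertion that the $l$ balance relations always have exactly one linear dependence is false in general: a pair of coincident components may lie in two different $J_{t}$'s with equal rather than opposite signs, and values of multiplicity $\geq 3$ or triple/quadruple coincidences change the count; when the effective rank drops, one must show the loss is compensated elsewhere. Quantifying exactly how many degrees of freedom each correlation actually removes --- the paper's phrase is that ``adding one correlation does not necessarily lead the freedom degree to decrease by one'' --- is the substance of the proof, not a verification to be deferred. As written, your argument establishes the bound only for clusters whose correlation structure survives $l-1$ successive reductions, which is a strictly smaller class than the one Lemma \ref{cluster} must cover.
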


For fixed $p$, all the involved moments no higher than $p$ are
$O(1)$ because of uniform boundedness of matrix entries. On the
other hand, $0\leq \prod_{t=1}^{l}I_{J_{t}}\leq 1$. So to prove
Lemma \ref{cluster}, we just need to count the number of terms in
(\ref{eq:cluster}).
 As before, to complete the estimation it suffices to replace
$\Pnki$ by $\Pnk$. That is, it suffices to prove
\be\label{eq:cluster1} b_n^{-\frac{p_1+\cdots+p_l-l}{2}}
{\sum_{J_1  \in \mathfrak{B}_{n,p_{1}},\ldots,J_l \in \mathfrak{B}_{n,p_{l}}}}^{\star}\  1=o(1)  \ee
where the summation ${\sum}^{\star}$ is taken over $l$ vectors which
exactly form a $cluster$.

Instead of Lemma \ref{cluster}, we will prove

\begin{lemma}\label{general case}
Provided $l \geq 3$, $\mathbf{p}=(p_1,\ldots,p_l)$ with positive
integers $p_1,\ldots,p_l\geq 2$. Let $\mathfrak{B}_{\mathbf{p}}$ be
a subset of the Cartesian product $\mathfrak{B}_{n,p_1} \times \dots
\times\mathfrak{B}_{n,p_l}$ such that $(J_1,J_2,\ldots,J_l)\in
\mathfrak{B}_{\mathbf{p}}$
if and only if \\
(i) any element in $\bigcup_{i=1}^{l} S_{J_i}$ has at least two multiplicities in the union;\\
(ii) $J_1,J_2,\ldots, J_l$ make a \emph{cluster};\\
(iii) $0 \overline{\in} \bigcup_{i=1}^{l}S_{J_i}$.\\
Then we claim that
\be{\textrm{card}|\mathfrak{B}_{\mathbf{p}}|=o({b_n}^{\frac{p_1+p_2+\cdots+p_l-l}{2}}).}\ee
\end{lemma}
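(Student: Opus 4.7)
The plan is to reduce Lemma~\ref{general case} to the sort of trace-style combinatorial count already handled in Section~\ref{expectation} by iterating the \emph{reduction} step of Section~\ref{variance} along a spanning tree of the cluster structure. Since the target exponent $(p_1+\cdots+p_l-l)/2$ beats the exponent produced by a single merged vector by $(l-2)/2$, the $o(\cdot)$ saving will come purely from the fact that $l\geq 3$.

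Concretely, let $P = p_1+\cdots+p_l$. By condition (ii) the correlation graph on vertices $\{1,\ldots,l\}$, with an edge between $i$ and $j$ whenever $S_{J_i}\cap S_{J_j}\neq\emptyset$, is connected; fix any spanning tree $T$ of it. Process the edges of $T$ in a leaf-first order: at each step pick a leaf $J_i$ and its tree-neighbor $J_j$, select a canonical shared value $\theta\in S_{J_i}\cap S_{J_j}$, and replace the pair by the merged balanced vector $\tilde J = J_i\bigvee_{\theta} J_j$ of length $p_i+p_j-2$ (this construction is exactly the one used in the proof of Theorem~\ref{covariancecalculation}). The leaf-first ordering guarantees that at every step a valid joint point is still available, since removing a leaf does not disturb the remaining tree-connections of its neighbor. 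After the $l-1$ reductions, the tuple collapses to a single balanced vector $L$ of length $P-2(l-1) = P-2l+2$.

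Now invoke the counting from the proof of Theorem~\ref{limitmoment}: balanced vectors of length $P-2l+2$ whose projection has every element of multiplicity at least $2$ number $O(b_n^{(P-2l+2)/2})$ (the $\Gamma_1$-type contribution, with $\Gamma_2$ and $\Gamma_3$ strictly smaller); any balanced vector with a singleton in its projection has that singleton pinned down by its balance equation, contributing only $O(b_n^{(P-2l)/2})$. Because each reduction step is invertible with at most $2 p_i p_j$ choices, exactly as in Section~\ref{variance}, every image $L$ admits at most $C(p_1,\ldots,p_l)=O(1)$ pre-images in $\mathfrak{B}_{\mathbf{p}}$ under the procedure. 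Combining, $|\mathfrak{B}_{\mathbf{p}}| = O(b_n^{(P-2l+2)/2})$, and for $l\geq 3$ one has $(P-2l+2)/2 \leq (P-l)/2 - 1/2$, so $|\mathfrak{B}_{\mathbf{p}}| = o(b_n^{(P-l)/2})$, which is exactly the claim.

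The main obstacle is the multiplicity bookkeeping across iterated reductions: condition (i) of the lemma concerns the union $\bigcup_i S_{J_i}$, but a joint point with multiplicity $\geq 3$ in that union will leave a singleton in $S_L$ after being consumed by a reduction. One has to carry the multiplicity structure through each step and check that every such singleton either is immediately forced by the balance of $L$ (yielding a factor of $b_n^{-1}$ in the freedom count) or comes with an equivalent decrease in the number of independent absolute values, so that the $O(b_n^{(P-2l+2)/2})$ bound survives uniformly. Condition (iii) enters only implicitly, by restricting all absolute values to $\{1,\ldots,b_n\}$ and thereby matching the Riemann-sum calculus used in Section~\ref{expectation}.
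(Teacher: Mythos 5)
Your plan breaks at its first substantive step: the claim that a leaf-first traversal of a spanning tree always permits $l-1$ successive reductions collapsing the cluster to a single balanced vector $L$ of length $P-2l+2$. A reduction $J_i\bigvee_{\theta}J_j$ consumes one occurrence of $\theta$ from $S_{J_j}$ as well as from $S_{J_i}$, so merging a leaf \emph{does} disturb its neighbor: if the tree edge from $J_j$ to its parent $J_k$ relied on the same value $\theta$, and $\theta$ occurred only once in $S_{J_j}$, the shared element needed for the next reduction is gone and the procedure cannot continue. The obstruction is not hypothetical --- the ``star'' configuration in which three or more vectors each contain a common value $\theta$ exactly once (the situation isolated as Fact 1 in the paper's proof, $h_\theta\geq 3$) admits no cluster-preserving reduction at $\theta$ at all, and after a forced merge the remaining vectors need not even be correlated with the merged one. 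This is precisely why the paper stops reducing as soon as the cluster structure would be destroyed and then handles the irreducible residue $J'_1,\ldots,J'_{l'}$ with $l'>1$ by an entirely separate argument (minimal dominating sets, the three Facts, and the case analysis on $h$ versus $l'$, $t$, $l_0$, $m$). Your proposal has no counterpart to that half of the proof, and it is where all the real work lies.

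Two further points. First, your treatment of singletons is too optimistic: with $l_0$ singletons one cannot in general pin each down independently by a balance equation; the paper extracts only \emph{one} such constraint per carefully chosen vector (e.g.\ Case I and Case I$'$) and must combine it with the inequality $l_0\leq m$ to close the count, so the uniform bound $O(b_n^{(P-2l+2)/2})$ you assert does not ``survive'' by bookkeeping alone. Second, condition (iii) is not merely implicit: in the paper's Case III ($l_0=0$, $m=0$, $h=l'$, $t=1$) the hypothesis $0\notin\bigcup_i S_{J_i}$ is invoked explicitly to produce a second element $\gamma$ whose signed occurrences in $J_1$ are unbalanced, which supplies the extra lost degree of freedom; without that step the estimate fails for odd $p_i$ (this is exactly why the paper assumes $a_0\equiv 0$). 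You have identified the correct target exponent and the correct source of the $(l-2)/2$ saving, but the argument as written does not reach it.
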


 Notice that we list the condition (iii) which
looks redundant from $J\in \{\pm 1,\ldots,\pm b_{n}\}^{p}$ to
emphasize the importance of $0$ (i.e., the diagonal matrix entry
$a_{0}$). In fact, if $p_1, p_2, \cdots, p_l$ are all even, even for
these $J\in \{0, \pm 1,\ldots,\pm b_{n}\}^{p}$ but under the
conditions (i) and (ii), we can still get the above estimation.

\begin{proof}[Proof of Lemma \ref{general case}]
The intuitive idea of the proof is as follows: from condition (i) in
Lemma \ref{general case}, regardless of the correlating condition,
the cardinality of $\mathfrak{B}_{\mathbf{p}}$(denoted by
card$|\mathfrak{B}_{\mathbf{p}}|$ for short) is
$O(b_{n}^{\frac{p_1+p_2+\cdots+p_l}{2}})$. We could say that the
freedom degree is $\frac{p_1+p_2+\cdots+p_l}{2}$. But each
correlation means two vectors share a common element so that it will
decrease the freedom degree by one. To form a \emph{cluster} we need
$l-1$ correlations. So
$\textrm{card}|\mathfrak{B}_{\mathbf{p}}|=O(b_{n}^{\frac{p_1+p_2+\cdots+p_l}{2}-(l-1)})$.
When $l \geq 3$, $l-1 > \frac{l}{2}$. Thus we obtain the desired
estimation in Lemma \ref{general case}. However,  unfortunately
adding one correlation does not necessarily lead the freedom degree
to decrease by one. Some may be redundant. So we have to make use of
 the correlations more efficiently.

 As in section \ref{variance}, we do the \emph{reduction} steps
as long as the structure of the $cluster$ is preserved. To say
precisely, we start from checking $J_1$ and $J_2$. If $j_u$ is the
first joint point of $J_1$ and $J_2$ satisfying the condition that
$J_{1}{\bigvee}_{|j_u|}J_{2}$ can still form a \emph{cluster} with
the other vectors, then we do this $reduction$ step. If this kind of
$j_u$ does not exist, we turn to check $J_1$ and $J_3$ in the same
way. After each $reduction$ step, we have a new cluster of vectors
$\widetilde{J_{1}},\widetilde{J_{2}},\ldots,\widetilde{J_{\widetilde{l}}}$.
We continue to check $\widetilde{J_{1}}$ and $\widetilde{J_{2}}$ as
before. If we cannot do any $reduction$ step, we stop.
\par
Suppose that we did $m$ \emph{reduction} steps in total. Then we
have a new $cluster$ of vectors $J'_1,J'_2,\ldots,J'_{l'}$ and the
dimension of $J'_i$ is $p'_i$ for $1 \leq i \leq l'$. From the
reduction process, for any $\theta \in S_{J'_{{\alpha}_1}}\bigcap
S_{J'_{{\alpha}_2}}$,
$J'_{{\alpha}_1}{\bigvee}_{\theta}J'_{{\alpha}_2}$ cannot form a
\emph{cluster} with the other vectors. The resulting $cluster$ still
satisfies condition (ii) in Lemma \ref{general case}. However, the
condition (i) may fail because the joint point of a pair of
correlated vectors could be triple multiplicity, thus after a
\emph{reduction} step its multiplicity becomes one.
\par
Note that after a $reduction$ step  the number of pre-images of the
resulting vector  only depends on the dimensions of the involving
vectors, not depending on $n$. Thus we  only need to estimate the
degree of freedom of the reduced vectors $J'_1,J'_2,\ldots,J'_{l'}$.
\par
Since after one \emph{reduction} step the total dimension of vectors
will decrease by two and the number of vectors will decrease by one,
we have
\be\label{relation1} \sum_{i=1}^{l}p_i=\sum_{i=1}^{l'}p'_i+2m \ee %
and  \be\label{relation2}l=l'+m. \ee Denote by $l_0$ the number of
single multiplicity elements in $\bigcup_{i=1}^{l'}S_{J'_i}$. Since
one \rs \  will add at most one element with single multiplicity,
therefore \be\label{inequ:l_0} l_0\leq m. \ee

Below, we will proceed according to two cases: $l'>1$ and $l'=1$.

\begin{flushleft}
\Large{$\mathbf{In\: the\: case\:} l'>1$}
\end{flushleft}
To complete the proof of this case, we need some definitions and
notations.
\par
Let $\mathcal{U}$ be the set consisting of all elements which
belongs to at least two of $S_{J'_{1}}, \ldots,  S_{J'_{l'}}$, i.e.
$\mathcal{U}=\{\theta|\exists i\neq j \quad \mathrm{s.t.}\quad
\theta \in S_{J'_i}\bigcap S_{J'_j} \}$. Since $l'>1$ and $J'_{1},
\ldots, J'_{l'}$ forms a cluster, we get $\mathcal{U}\neq \emptyset$
 . For any $\theta\in\bigcup_{i=1}^{l'}S_{J'_i}$, set
$H_{\theta}=:\{J'_{i}|\,\theta \  \mathrm{or} -\theta \in
J'_{i},\quad 1\leq i\leq l'\}$ and denote the number of vectors in
$H_{\theta}$ by $h_{\theta}=\mathrm{card}|H_{\theta}|$. Obviously,
$\{J'_{i}|i\leq l'\}=\bigcup_{\theta \in \mathcal{U}}H_{\theta}$.

We notice the following three  facts.

Fact 1: For any $\theta\in \mathcal{U}$, $h_{\theta} \geq 3$. In
fact, from the definition of $\ \mathcal{U}$, we know $h_{\theta}
\geq 2$. If $h_{\theta}=2$, the two vectors in $H_{\theta}$ could
still be reduced to one vector and the reduction doesn't affect
their connection with the other vectors, which is a contradiction
with our assumption that $J'_1,J'_2,\ldots,J'_{l'}$ cannot be
reduced.
\par
Fact 2: For any $\theta\in \mathcal{U}$ and $J'_i\in H_{\theta}$,
the multiplicity of $\theta$ in $S_{J'_i}$ is one . Otherwise, there
are two vectors belonging to $H_{\theta}$, for example, $J'_{1},
J'_{2}\in H_{\theta}$ but $S_{J'_{1}}$ has two $\theta$'s, then
$J'_1{\bigvee}_{\theta}J'_2$ could    be a $reduction$ step and
$J'_1{\bigvee}_{\theta}J'_2, J'_{3}, \ldots,  J'_{l'}$ still forms a
cluster.

\par
Fact 3: For any different elements $\theta$  and $\gamma$ in
$\bigcup_{i=1}^{l'}S_{J'_i}$,  card$|H_{\theta}\bigcap
H_{\gamma}|\leq 1$. Otherwise, suppose $J'_i$ and $J'_j$ belongs to
$H_{\theta}\bigcap H_{\gamma}$ and  $J'_k$ is an element of
$H_{\theta}$ other than $J'_i$ and $J'_j$. From Fact 1,  $J'_k$ must
exist. Now $J'_i{\bigvee}_{\theta}J'_k$ can form a $reduction$ step
since $J'_i{\bigvee}_{\theta}J'_k$ could be correlated with $J'_j$
by $\gamma$ and other correlations won't be broken.
\begin{definition}
$\mathcal{V}\subset \mathcal{U}$ is called a \ds of $\{J'_{1},
\ldots, J'_{l'}\}$ if $\{J'_{i}|1\leq i\leq l'\}=\bigcup_{\theta \in
\mathcal{V}}H_{\theta}$.
\end{definition}

Choose  a minimal \ds denoted by $\mathcal{U}_0$,  which means that
any proper subset of $\mathcal{U}_0$ is not a \emph{dominating set}.
Since $\mathcal{U}$ is a finite set, $\mathcal{U}_0$ must exist and
write $\mathcal{U}_0=\{\theta_i|1\leq i\leq t\}$. For any $1\leq
j\leq t$, since $ \mathcal{U}_0\backslash\{\theta_{j}\}$ is not a
\ds,    there exists $J'_{k_j}\in H_{\theta_j}\backslash
\bigcup_{i\neq j }H_{\theta_i}$. Once we have already known the
elements of $\bigcup_{i=1}^{l'}S_{J'_i}\backslash \mathcal{U}_0$,
 we know all the elements in
$S_{J'_{k_j}}$ other than $\theta_j$, thus  $\theta_j$ will be
determined by the balance of $J'_{k_j}$.

Set
$$h=\sum_{i=1}^{t}h_{\theta_i},$$ then the different way of choice
of $\bigcup_{i=1}^{l'}S_{J'_i}\backslash \mathcal{U}_0$ is
$O(b_n^{\frac{\sum_{i=1}^{l'}p'_i-l_0-h}{2}+l_0})$. From Eqs.
(\ref{relation1}) and (\ref{relation2}), we have
\be\label{equ:mainestimating}
\frac{\sum_{i=1}^{l'}p'_i-l_0-h}{2}+l_0=\frac{(\sum_{i=1}^{l}p_i-l)-(m-l_0)-(h-l')}{2}.\ee
Note that $m-l_{0}\geq 0$ from Eq. (\ref{inequ:l_0}).
 If $h>l'$, we have \be
O(b_n^{\frac{(\sum_{i=1}^{l}p_i-l)-(m-l_0)-(h-l')}{2}})=o(b_n^{\frac{\sum_{i=1}^{l}p_i-l}{2}}).
\ee %
If $h=l'$ and $t=1$, the analysis is easy but a little complex. We
will deal with it later.
\par
Now we focus on the situation that $h=l'$ and $t>1$. In this case,
since $\{J'_{i}|i\leq l'\}=\bigcup_{\theta \in
\mathcal{U}_0}H_{\theta}$, $l'\leq \sum_{i=1}^{l'}
\mathrm{card}|H_{\theta_i}|=h$ and the equity is true if\mbox{}f
$H_{\theta_i}\bigcap H_{\theta_j}=\emptyset$ for any $1\leq i\neq j
\leq t$. Because $J'_1,J'_2,\ldots, J'_{l'}$ form a cluster, without
lost of generality, we could assume that $(\bigcup_{J'\in
H_{\theta_1}}S_{J'})\bigcap (\bigcup_{J'\in H_{\theta_2}}S_{J'})\neq
\emptyset$. Thus  there exist $J'_{s_1}\in H_{\theta_1}$
,$J'_{s_2}\in H_{\theta_{2}}$ and $\gamma \in S_{J'_{s_1}}\bigcap
S_{J'_{s_2}}$. We know that $\gamma \in \mathcal{U} \backslash
\mathcal{U}_0$ since $H_{\theta_i}\bigcap H_{\theta_j}=\emptyset$
for any $1\leq i\neq j \leq t$. From Fact 3,
$\mathrm{card}|H_{{\theta}_i}\bigcap H_{\gamma}|=0$ or $1$. Given
the elements of $\bigcup_{i=1}^{l'}S_{J'_i}\backslash
(\mathcal{U}_0\bigcup\{\gamma\})$, $\theta_i$ can be decided by the
the balance of some $J'\in H_{\theta_i}\backslash H_{\gamma}$. Then
$\gamma$ can be decided by any vector in $H_{\gamma}$. Thus we have
$O(b_n^{\frac{\sum_{i=1}^{l'}p'_i-l_0-h-h_{\gamma}}{2}+l_0})$ ways
to decide $J'_1,J'_2,\ldots, J'_{l'}$. From
Eq.(\ref{equ:mainestimating}), one gets
 \be O(b_n^{\frac{\sum_{i=1}^{l'}p'_i-l_0-h-h_{\gamma}}{2}+l_0})=O(b_n^{\frac{(\sum_{i=1}^{l}p_i-l)-(m-l_0)-(h-l')-h_{\gamma}}{2}})
 =o(b_n^{\frac{\sum_{i=1}^{l}p_i-l}{2}}). \ee
\\

\par
If $h=l'$ and $t=1$, which means that ${\theta}_{1}$ appears exactly
one time in each $S_{J_i}(1 \leq i \leq l')$, we will divide the
situation
  into three subcases. 
\newline
\textbf{Case I: $l_0>0$}.\\
Without loss of generality, we suppose $\alpha\in J'_1$ is an
element with single multiplicity in $\bigcup_{i=1}^{l'}{J'_i}$. If
the elements except for $\theta_1$ and $\alpha$ are known,
$\theta_1$ could be determined from the balance of $J'_2$ and then
$\alpha$ could be determined from the balance of $J'_1$. So we have
$$O(b_n^{\frac{\sum_{i=1}^{l'}p'_i-l_0-h_{\theta_1}}{2}+l_0-1})$$
ways of choice  in sum. From $h_{\theta_1}=h=l'$ and Eqs.
(\ref{relation1}) and (\ref{relation2}), we get
\begin{equation*}
O(b_n^{\frac{\sum_{i=1}^{l'}p'_i-l_0-h_{\theta_1}}{2}+l_0-1})=O(b_n^{\frac{\sum_{i=1}^{l}p_i-l}{2}-1})
=o(b_n^{\frac{\sum_{i=1}^{l}p_i-l}{2}}).
\end{equation*}
\\
\textbf{Case II: $l_0=0$ and $m>0$}.\\
As case I above, $\theta_1$ is determined by other elements. To
determine  all the elements other than $\theta_1$, we have
$O(b_n^{\frac{\sum_{}^{}p'_i-l'}{2}})$ ways. From Eqs.
(\ref{relation1}) and (\ref{relation2}), we have
\begin{equation*}
O(b_n^{\frac{\sum_{}^{}p'_i-l'}{2}})=O(b_n^{\frac{\sum_{}^{}p_i-l}{2}-\frac{m}{2}})=
o(b_n^{\frac{\sum_{i=1}^{l}p_i-l}{2}}).
\end{equation*}
\\
\textbf{Case III: $l_0=0$ and $m=0.$}\\
In this case, we cannot do any \emph{reduction} step.
\par
We claim that there exists $\gamma \in S_{J_1}$ other than $\gamma
\neq \theta_1$ such that the number of $+\gamma \,\textrm{and}
-\gamma$ in $\gamma \in S_{J_1}$ are not equal (when $p_{1}$ is even
we can always find some $\gamma$ other than $\theta_1$ such that
$\gamma$ occurs odd times in $S_{J_1}$). Otherwise, ${\theta}_{1}=0$
because of the balance of $J_1$, which contradicts  condition (iii)
in Lemma \ref{general case}.
\par
To  determine  all the elements except for ${\theta}_{1}$ and
$\gamma$, we  have
$$O(b_n^{\frac{(p_1+\ldots+p_l)-(h_{\theta_1}+1)}{2}})$$ ways. Since $H_{\theta_1}=\{J_1,J_2,\ldots,J_{l}\}$,
$H_{\gamma}=$card$|H_{\theta_1}\bigcap H_{\gamma}|=1$, which means
$H_{\gamma}=\{J_1\}$. So we can determine ${\theta}_{1}$ from the
balance of $J_2$. Then $\gamma$ will be  determined by the balance
of $J_1$. Since $h_{\theta_1}=l$, we have
$$O(b_n^{\frac{(p_1+\ldots+p_l)-(h_{\theta_1}+1)}{2}})=o(b_n^{\frac{(p_1+\ldots+p_l)-l}{2}}).$$
\par

 Now we have completed the proof in the case of $l'>1$.

\begin{flushleft}
\Large{$\mathbf{In\: the\: case\:} l'=1$}
\end{flushleft}

We also divide this  case into  two subcases.\\
\textbf{Case I$'$: $l_0>0$}.\\
 Suppose $\alpha\in J'_1$ is an element
with single multiplicity in $\bigcup_{i=1}^{l'}{J'_i}$. If the
elements other than $\alpha$ are known, $\alpha$ could be determined
from the balance of $J'_1$. So we have totally
$O(b_n^{\frac{\sum_{i=1}^{l'}p'_i-l_0}{2}+l_0-1})$ ways. From $l'=1$
and Eqs. (\ref{relation1}), (\ref{relation2}) and (\ref{inequ:l_0}),
we have
\begin{equation*}
O(b_n^{\frac{\sum_{i=1}^{l'}p'_i-l_0}{2}+l_0-1})=O(b_n^{\frac{\sum_{i
=1}^{l}p_i-l}{2}-\frac{m-l_0+1}{2}})=o(b_n^{\frac{\sum_{i=1}^{l}p_i-l}{2}}).
\end{equation*}
\\
\textbf{Case II$'$: $l_0=0$}.\\
In this case, every element in $S_{J'_1}$ appears at least twice.
Thus we have totally $O(b_n^{\frac{\sum_{i=1}^{l'}p'_i}{2}})$ ways.
From $l'=1$ and Eqs.  (\ref{relation1}) and (\ref{relation2}), it
follows that
\begin{equation*}
O(b_n^{\frac{\sum_{i=1}^{l'}p'_i}{2}})=O(b_n^{\frac{\sum_{i=1}^{l}p_i-l}{2}-\frac{l-2}{2}})=
o(b_n^{\frac{\sum_{i=1}^{l}p_i-l}{2}})
\end{equation*}
since $l\geq 3$.

 Now we have completed the proof in the case of
$l'=1$ .

The proof of Lemma  \ref{general case}
 is then complete.
\end{proof}

\begin{remark}\label{remarka0} From the analysis of case III above, we could also understand why the technical but
necessary condition of $a_0\equiv 0$ is assumed in the introduction.
But when $p$ is
even, the assumption of $a_0\equiv 0$ is not necessary.
\end{remark}

\begin{remark}\label{perturbancefactor} From the  calculation of mathematical expectation, variance,
 and higher moments in sections  \ref{expectation}, \ref{variance}, \ref{highermoments},
  the factor $I_{J}$ in the trace formula of  \ref{basic:lem1} can be replaced by a more general
  non-negative integrable bounded function. For example, if $I_{J}\equiv 1$ for all
  $J$, then the results of asymptotic distribution and fluctuation  hold the same as in case
  where
  $b=0$. More precisely, as $n \longrightarrow\iy$ \be{\frac{1}{b^{\frac{p}{2}}_{n}}\sum^{b_{n}}_{j_{1},\ldots,
j_{p}=-b_{n}}\mathbb{E}[\prod_{l=1}^{p}a_{j_{l}}]
\large{\delta}_{0,\sum\limits_{l=1}^{p}j_{l}}}\longrightarrow
\begin{cases} 2^{\frac{p}{2}}(p-1)!!,\
p  \  \text{is even};\\
0, \ \ \ \ \ \ \ \ \ \ \ \ \ \ p \ \text{is odd}
\end{cases}
\ee and the fluctuation can be stated as in Corollary
\ref{corollary} (For conciseness we replace $b_n$ by $n$ there).
From the above point of view and the extended results, our method
gives a mechanism of producing a CLT.
\end{remark}

\section{Extensions to other models}\setcounter{equation}{0}
\label{extensions} In this section, we will make use of our method
to deal with some random matrix models closely  related  to Toeplitz
matrices, including Hermitian Toeplitz band matrices, Hankel band
matrices, sparse Toeplitz and Hankel   matrices, generalized
Wishart-type Toeplitz matrices, etc. Besides, we will also  consider
the case of several random matrices in a flavor of free probability
theory. Before we start our extensions, we first review and
generalize the key procedures or arguments in calculating
mathematical expectation (the distribution of eigenvalues),
covariance and higher moments in sections \ref{expectation},
\ref{variance}, \ref{highermoments} respectively   as follows:

1)``Good" trace formulae. See Lemma \ref{basic:lem1}. The simple
formula both represents the form of the matrix and translates our
object of matrix entries to its subscripts $j_{1},\ldots,j_{p}$,
which are integers satisfying some homogeneous equation. In the
present paper, we mainly encounter such homogeneous equations as
\be\label{homogenouseq}\sum\limits_{l=1}^{p}\tau_{l}j_{l}=0,\ee
where $\tau_{l}$ can take $\pm 1$. We write
$J=(j_{1},\ldots,j_{p})$.

2)``Balanced" vectors via some homogeneous equations. We can
generalize the concept of balance: a vector $J=(j_{1},\ldots,j_{p})$
is said to be balanced if its components satisfy one of finite fixed
equations with the form of (\ref{homogenouseq}). From the balance of
a vector, one could determine an element by knowing the other ones
and solve one of the equations.

 3) Reduction via a joint point.  Eliminate the joint point from two correlated vectors
 in some definite way,
 and one  gets a new balanced vector.

 4) Choose a minimal dominating set  by which the freedom degree can be reduced case-by-case.
 We should  particularly be careful about 0.\\

The CLT  is essentially a consequence of the fact that we can omit
the terms who have a cluster consisting of more than two vectors.
Thus if the argument in section \ref{highermoments} is valid, the
CLT is true. In the following models, we will establish a good trace
formulae in each case and then balanced vector and reduction step
can be defined in a natural way. With these equipment the analysis
in section \ref{highermoments} is still valid after a small
adaption.

The mathematical expectation and covariance vary from case to
case. But the way to calculate them is similar with each other. We
will define some suitable integrals associated with pair partitions
similar to those in section \ref{Integrals associated with pair
partitions}, after which we can state the results concisely.

We will only state main results but not give their  proofs in
detail, because the proofs would have been overloaded with
unnecessary notations and minor differences. When necessary, we
might point out some differences of proofs.

\subsection{Hermitian Toeplitz band matrices}

The case  is very similar to real symmetric  Toeplitz band matrices,
except that we now consider $n$-dimensional complex Hermitian
 matrices $T_{n}=(\eta_{ij}\, a_{i-j})_{i,j=1}^{n}$. We assume that
$\mbox{Re}\,a_{j}=\mbox{Re}\,a_{-j}$ and
$\mbox{Im}\,a_{j}=-\mbox{Im}\,a_{-j}$ for $j=1,2,\cdots$, and $
\{\mbox{Re}\,a_{j},\mbox{Im}\,a_{j}\}_{j\in \mathbb{N}}$ is a
sequence of independent real random variables such that \be
\label{hermitiantoeplitz1}\mathbb{E}[a_{j}]=0, \ \
\mathbb{E}[|a_{j}|^{2}]=1 \ \ \textrm{and}\  \mathbb{E}[a_{j}^{2}]=0
\ \textrm{for}\,\ \ j\in \mathbb{N},\ee (homogeneity of 4-th
moments) \be{\label{hermitiantoeplitz2}\kappa=\mathbb{E}[|a_{j}|^{4}],}\ee
 \noindent and further (uniform boundedness)
 \be\label{hermitiantoeplitz3}\sup\limits_{ j\in \mathbb{Z}}
\mathbb{E}[|a_{j}|^{k}]=C_{k}<\iy\ \ \  \textrm{for} \ \ \ k\in
\mathbb{N}.\ee In addition, we also assume $a_{0}\equiv 0$  and  the
bandwidth $b_{n}\ra \iy$ but $b_{n}/n \rightarrow b\in[0,1]$.

\begin{theorem} \label{complextoeplitzcase}With above assumptions and  notations,
 Theorem \ref{tpg} also holds for random Hermitian Toeplitz band
matrices.
\end{theorem}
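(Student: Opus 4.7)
The plan is to mimic the proof of Theorem \ref{tpg} step by step, noting carefully where the Hermitian symmetry $a_{-j}=\overline{a_j}$ and the extra constraint $\mathbb{E}[a_j^2]=0$ change the combinatorial analysis. The trace formula in Lemma \ref{toeplitzlemma} is a statement of pure matrix algebra and therefore applies verbatim to $T_n$; the only difference is that the random variable attached to a subscript $j_l\in\{\pm 1,\ldots,\pm b_n\}$ is now $a_{j_l}$ with $a_{-j_l}=\overline{a_{j_l}}$ rather than the real scalar $a_{|j_l|}$. Thus we still reduce the computation of $\mathbb{E}[\frac{1}{n}\mathrm{tr}(A_n^p)]$, of $\mathrm{Cov}(\omega_p,\omega_q)$, and of the joint higher moments $\mathbb{E}[\omega_{p_1}\cdots\omega_{p_l}]$ to counting balanced vectors $J\in\Pnk$ inside the categories $\Gamma_1,\Gamma_2,\Gamma_3$ introduced in Section \ref{expectation}.

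The first substantive change occurs in the evaluation of the expectations $\mathbb{E}[a_J]$ and $\mathbb{E}[a_Ja_{J'}]$. In the real symmetric model, both same-sign and opposite-sign pairings of coincident subscripts have expectation one, and same-sign pairings are discarded only at the level of degrees of freedom (they lie in $\Gamma_{32}$). Under the Hermitian assumption $\mathbb{E}[a_j^2]=0$, same-sign pairings are not merely subleading but \emph{identically zero in expectation}, while opposite-sign pairings retain the value $\mathbb{E}[|a_j|^2]=1$. Consequently Theorem \ref{limitmoment} persists with the same limiting moments $M_{2k}$, and the leading contribution to the covariance in Theorem \ref{covariancecalculation} comes only from opposite-sign reductions, i.e.\ from the $f_I^-$ and $f_{II}^-$ type integrals, with the $f_I^+$ and $f_{II}^+$ pieces dropping out. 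Thus the Hermitian variance reads
\begin{equation*}
\sigma_{p,q}^{\mathrm{H}}=\sum_{\pi\in\mathcal{P}_2(p,q)}f_I^-(\pi)+(\kappa-1)\sum_{\pi\in\mathcal{P}_{2,4}(p,q)}f_{II}^-(\pi),
\end{equation*}
with the corresponding formula for $\sigma_Q^2$.

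For the higher-moment step one follows Section \ref{highermoments} essentially word for word. The \rs{} along a joint point $|j_u|=|j'_v|$ must now always be taken with opposite signs, since same-sign joint points contribute nothing to the expectation. Apart from this, the dimensional estimates in Lemma \ref{general case}, including the analysis through a minimal \ds{} and the case-by-case treatment of $l_0>0$, $l_0=0,\ m>0$, and $l_0=0,\ m=0$, rely only on counting distinct absolute values and solving the balance equations, so they are insensitive to signs. Hence clusters of size $l\geq 3$ are again negligible, and the joint moments $\mathbb{E}[\omega_{p_1}\cdots\omega_{p_l}]$ converge to those of the Gaussian family with covariance $\sigma_{p,q}^{\mathrm{H}}$.

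The main obstacle I anticipate is purely bookkeeping: one has to verify that the complex conjugation pattern $a_{-j}=\overline{a_j}$ interacts correctly with the notion of joint point and with the reduction $J\bigvee_{|j_u|}J'$, in the sense that (i) pairings forced by $\mathbb{E}[a_j^2]=0$ are exactly the opposite-sign ones at every stage of the cluster analysis, (ii) the reduced vector is still balanced of dimension $p+q-2$, and (iii) the pre-image count remains bounded independently of $n$. Once these verifications are in place, the arguments of Sections \ref{expectation}--\ref{highermoments} carry over unchanged and yield the Hermitian CLT.
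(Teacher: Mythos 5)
Your overall strategy is exactly the one the paper intends (the paper gives no detailed proof, only a remark recording the modified covariance): the trace formula, the category analysis for the moments, and the cluster/reduction counting of Section \ref{highermoments} are all sign-insensitive and carry over, while the constraint $\mathbb{E}[a_j^2]=0$ kills same-sign crossing pairs. However, your covariance formula is wrong in one concrete place: you drop the $f_{II}^{+}$ term, whereas the paper's remark (correctly) retains it, giving
\begin{equation*}
\sigma_{p,q}^{\mathrm{H}}=\sum_{\pi\in\mathcal{P}_{2}(p,q)}f^{-}_{I}(\pi)+(\kappa-1)\sum_{\pi\in
\mathcal{P}_{2,4}(p,q)}\left(f^{-}_{II}(\pi)+f^{+}_{II}(\pi)\right).
\end{equation*}
The reason is that the two sign-variants play different roles in the two sums. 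For $\pi\in\mathcal{P}_{2}(p,q)$ the crossing match consists of one component of $J$ and one of $J'$; a same-sign match contributes $\mathbb{E}[a_j^2]=0$, so indeed only $f_I^{-}$ survives. But for $\pi\in\mathcal{P}_{2,4}(p,q)$ the four-element block takes an \emph{internally opposite-sign pair} from each of $J$ and $J'$ (this is forced by the balance conditions, as in the real case), so the product of the four entries is $a_j a_{-j}\,a_j a_{-j}=|a_j|^2\,|a_j|^2=|a_j|^4$ regardless of the relative sign between the $J$-pair and the $J'$-pair. Both relative-sign configurations therefore contribute weight $\kappa-1$, and both $f_{II}^{-}$ and $f_{II}^{+}$ must appear. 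This does not affect the validity of the CLT itself (the cluster estimate is untouched), but it does give the wrong limiting variance, which is part of the assertion of the theorem.
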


\begin{remark}The distribution of eigenvalues for this case has been proved to be the same as real case
in \cite{LW}. The covariance like in (\ref{covariance}) is slightly
different from real case because  $\mathbb{E}[a_{j}^{2}]=0$, which
is  given by
\begin{align}
 \sum_{\pi\in \mathcal{P}_{2
}(p,q)}f^{-}_{I}(\pi)+(\kappa-1)\sum_{\pi\in
\mathcal{P}_{2,4}(p,q)}\left(f^{-}_{II}(\pi)+f^{+}_{II}(\pi)\right).
\end{align}
\end{remark}

\subsection{Hankel band matrices} A Hankel matrix $H_{n}=(h_{i+j-1})_{i,j=1}^{n}$
is closely related to a Toeplitz matrix. Explicitly, let
$P_{n}=(\delta_{i-1,n-j})_{i,j=1}^{n}$ the ``backward identity"
permutation, then for a Toeplitz matrix of the form
$T_{n}=(a_{i-j})_{i,j=1}^{n}$ and a Hankel matrix of the form
$H_{n}=(h_{i+j-1})_{i,j=1}^{n}$,   $P_{n}T_{n}$ is a Hankel matrix
and $P_{n}H_{n}$ is a Toeplitz matrix. In this paper we always write
a Hankel band  matrix $H_{n}=P_{n}T_{n}$ where $T_{n}=(\eta_{ij}\,
a_{i-j})_{i,j=1}^{n}$ is a Toeplitz band matrix with bandwidth
$b_{n}$  and the matrix entries $a_{-n+1}, \cdots, a_{0},
\cdots,a_{n-1}$ are real-valued, thus $H_{n}$ is a real symmetric
matrix.

For Hankel band matrices, as in Toeplitz case we  also have a trace
formula and its derivation is similar, see \cite{LW}.
\begin{lemma}\label{hankellemma}
 $\emph{tr}(H_{n}^{p})$=
 \be \label{basic:lem2}\nonumber
 \begin{cases} \sum\limits_{i=1}^{n}\,\sum\limits_{j_{1},\cdots,j_{p}=-b_{n}}^{b_{n}}
\prod\limits_{l=1}^{p}a_{j_{l}}\prod\limits_{l=1}^{p}\chi_{[1,n]}(i-\sum\limits_{q=1}^{l}(-1)^{q}j_{q})
\ \large{\delta}_{0,\sum\limits_{q=1}^{p}(-1)^{q}j_{q}},&\text{$p$ even;}\\
\sum\limits_{i=1}^{n}\,\sum\limits_{j_{1},\cdots,j_{p}=-b_{n}}^{b_{n}}
\prod\limits_{l=1}^{p}a_{j_{l}}\prod\limits_{l=1}^{p}\chi_{[1,n]}(i-\sum\limits_{q=1}^{l}(-1)^{q}j_{q})
\
\large{\delta}_{2i-1-n,\sum\limits_{q=1}^{p}(-1)^{q}j_{q}},&\text{$p$
odd.}
\end{cases}
 \ee
\end{lemma}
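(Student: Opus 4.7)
The plan is to reduce the lemma to the Toeplitz trace formula already established in Lemma \ref{toeplitzlemma} by exploiting the factorization $H_n = P_n T_n$, where $P_n$ is the backward identity. First I would record the elementary action $P_n e_k = e_{n+1-k}$ on the standard basis, which combined with the formula $T_n e_i = \sum_{j=-b_n}^{b_n} a_j \chi_{[1,n]}(i+j)\, e_{i+j}$ derived inside the proof of Lemma \ref{toeplitzlemma} yields
\[
H_n e_i \;=\; \sum_{j=-b_n}^{b_n} a_j \,\chi_{[1,n]}(i+j)\; e_{n+1-i-j}.
\]
Iterating $p$ times expresses $H_n^p e_i$ as a sum over $J=(j_1,\dots,j_p)\in\{-b_n,\dots,b_n\}^p$ of basis vectors $e_{s_p(i,J)}$, with coefficients $\prod_{l=1}^p a_{j_l}$ and validity factors $\prod_{l=1}^p \chi_{[1,n]}(s_{l-1}+j_l)$, where the indices obey the recursion $s_0=i$ and $s_l = n+1-s_{l-1}-j_l$.

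Next I would solve this recursion by a short induction on parity, obtaining
\[
s_l \;=\; i - \sum_{q=1}^l (-1)^q j_q \quad (l \text{ even}), \qquad s_l \;=\; n+1-i + \sum_{q=1}^l (-1)^q j_q \quad (l \text{ odd}).
\]
The slightly subtle point, and the one bit of bookkeeping to be careful with, is that the recursion produces an indicator $\chi_{[1,n]}(s_{l-1}+j_l)$, whereas the statement of the lemma uses $\chi_{[1,n]}(i-\sum_{q=1}^l(-1)^q j_q)$. These agree because of the reflection symmetry $x\in[1,n]\Leftrightarrow n+1-x\in[1,n]$: in both parity cases one verifies directly that $s_l\in[1,n]$ is equivalent to $i-\sum_{q=1}^l(-1)^q j_q\in[1,n]$, and similarly $s_{l-1}+j_l=n+1-s_l$ so its membership in $[1,n]$ is the same condition. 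Thus all basis-validity checks collapse into the uniform product stated in the lemma.

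Finally, computing the trace via $\mathrm{tr}(H_n^p) = \sum_{i=1}^n \langle e_i, H_n^p e_i\rangle$ picks out only those summands with $s_p = i$. For even $p$, the closed form forces $\sum_{q=1}^p(-1)^q j_q = 0$, giving the Kronecker symbol $\delta_{0,\sum_q (-1)^q j_q}$ in the first case of the lemma. For odd $p$, it forces $n+1-i+\sum_{q=1}^p(-1)^q j_q=i$, i.e.\ $\sum_{q=1}^p(-1)^q j_q = 2i-1-n$, giving the symbol $\delta_{2i-1-n,\sum_q(-1)^q j_q}$ in the second case. There is no serious obstacle; the proof is essentially parallel to the Toeplitz computation in Lemma \ref{toeplitzlemma}, the only genuinely new ingredient being the parity-dependent reflection introduced by $P_n$ and the observation that it leaves the range condition $[1,n]$ invariant.
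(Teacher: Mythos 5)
Your proof is correct, and it is exactly the derivation the paper has in mind: the paper omits the argument, stating only that it is "similar" to Lemma \ref{toeplitzlemma} and citing \cite{LW}, and your computation via $H_n=P_nT_n$, the basis action $P_ne_k=e_{n+1-k}$, and the parity-dependent recursion $s_l=n+1-s_{l-1}-j_l$ is the natural fleshing-out of that remark. The one genuinely delicate point — that the indicator $\chi_{[1,n]}(s_{l-1}+j_l)$ coincides with $\chi_{[1,n]}(i-\sum_{q=1}^{l}(-1)^qj_q)$ via the reflection symmetry of $[1,n]$ — is handled correctly.
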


From the above trace formula, our method can apply to the case that
$p$ is even. We consider random Hankel matrices satisfying the
following assumptions: assuming that $\{a_{j}:j\in \mathbb{Z}\}$ is
a sequence of
independent real  random variables 
 such that

\be \label{symmetrichankel1} \mathbb{E}[a_{j}]=0, \ \
\mathbb{E}[|a_{j}|^{2}]=1 \ \ \textrm{for}\,\ \ j\in \mathbb{Z},\ee
(homogeneity of 4-th moments)
\be{\label{symmetrichankel2}\kappa=\mathbb{E}[|a_{j}|^{4}], j\in
\mathbb{Z}}\ee
 \noindent and further (uniform boundedness)\be\label{symmetrichankel3}\sup\limits_{j\in \mathbb{Z}}
 \mathbb{E}[|a_{j}|^{k}]=C_{k}<\iy\ \ \
\textrm{for} \ \ \ k\geq 3.\ee 
In addition, we also assume  the bandwidth $b_{n}\ra \iy$ but
$b_{n}/n
\rightarrow b\in [0,1]$ as $n\rightarrow \infty$.

\begin{theorem}\label{hankeltheorem}
Let $H_{n}$ be  a real symmetric
((\ref{symmetrichankel1})--(\ref{symmetrichankel3})) random Hankel
band matrix with the bandwidth $b_{n}$, where $b_{n}/n \rightarrow b
\in [0,1]$ but $b_{n}\ra \iy$ as $n\rightarrow \infty$. Set
$A_{n}=H_{n}/\sqrt{b_{n}}$ and
\be{\zeta_{p}=\frac{\sqrt{b_{n}}}{n}\left(\mathrm{tr}(A_{n}^{2p})-\mathbb{E}[\mathrm{tr}(A_{n}^{2p})]\right).}\ee
Then \be \zeta_{p}\longrightarrow N(0,\tilde{\sigma}_p^2)\ee in
distribution as $n\rightarrow \infty$. Moreover, for a given
polynomial \be{Q(x)=\sum_{j=0}^{p}q_{j} x^{j}}\ee with degree $p\geq
1$, set
\be{\zeta_{Q}=\frac{\sqrt{b_{n}}}{n}\left(\mathrm{tr}Q(A^{2}_{n})-\mathbb{E}[\mathrm{tr}Q(A^{2}_{n})]\right),}\ee
we also have\be \zeta_{Q}\longrightarrow N(0,\tilde{\sigma}_Q^2)\ee
in distribution as $n\rightarrow \infty$. Here the variances
$\tilde{\sigma}_p^{2}$ and $\tilde{\sigma}_Q^2$ will be given below.
\end{theorem}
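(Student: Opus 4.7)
My plan is to adapt the three-step program of Sections \ref{expectation}, \ref{variance}, \ref{highermoments} (trace formula, reduction of correlated pairs, cluster estimate) to the Hankel setting, taking advantage of the fact that $\mathrm{tr}(A_{n}^{2p})$ is controlled by the clean (i.e.\ $i$-independent) Kronecker delta in Lemma \ref{hankellemma}. Concretely, I would call a vector $J=(j_{1},\ldots,j_{2p})\in\{\pm 1,\ldots,\pm b_{n}\}^{2p}$ \emph{Hankel-balanced} if $\sum_{q=1}^{2p}(-1)^{q}j_{q}=0$, and redo the classification of $J$ into three categories (each $|j_{q}|$ paired exactly once; some $|j_{q}|$ appearing with multiplicity one; some triple multiplicity or internal coincidences) exactly as in Section \ref{expectation}. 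The only genuine change is that the characters $\epsilon_{\pi}(q)$ are replaced by $(-1)^{q}\epsilon_{\pi}(q)$, so the main contribution to $\mathbb{E}[n^{-1}\mathrm{tr}(A_{n}^{2p})]$ converges to an integral over pair partitions of $[2p]$ of the same form as (\ref{bandtoeplitz:moment}) with this sign twist; this recovers the limiting moments already known from \cite{LW}.

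For the covariance, I would expand $\mathbb{E}[\zeta_{p}\zeta_{q}]$ (with $p,q$ now standing for the even exponents $2p,2q$) in full analogy with (\ref{covariance}). Correlated pairs $(J,J')$ of Hankel-balanced vectors are joined through a \emph{reduction step} at their first joint point $|j_{u}|=|j'_{v}|$: splicing $J'$ into $J$ at position $u$ after either flipping all signs of $J'$ or not, depending on whether $(-1)^{u+v}j_{u}j'_{v}$ is positive or negative, yields a Hankel-balanced vector $L$ of dimension $p+q-2$. The number of pre-images of any such $L$ is again $O(pq)$. I would then define four integrals $\tilde{f}_{I}^{\pm}(\pi)$, $\tilde{f}_{II}^{\pm}(\pi)$ by replacing $y_{i}$ with $(-1)^{i}y_{i}$ in (\ref{typeI-})--(\ref{typeII+}) so that the Riemann-sum argument produces
\[
\mathbb{E}[\zeta_{p}\zeta_{q}]\longrightarrow
\sum_{\pi\in\mathcal{P}_{2}(2p,2q)}\bigl(\tilde{f}_{I}^{-}(\pi)+\tilde{f}_{I}^{+}(\pi)\bigr)
+(\kappa-1)\sum_{\pi\in\mathcal{P}_{2,4}(2p,2q)}\bigl(\tilde{f}_{II}^{-}(\pi)+\tilde{f}_{II}^{+}(\pi)\bigr),
\]
which defines $\tilde{\sigma}_{p,q}$ and, by bilinearity, $\tilde{\sigma}_{Q}^{2}$.

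The CLT is then reduced, exactly as in Section \ref{highermoments}, to showing that clusters of three or more vectors give a negligible contribution to joint moments $\mathbb{E}[\zeta_{p_{1}}\cdots\zeta_{p_{l}}]$. The reduction process, the counting relations (\ref{relation1})--(\ref{inequ:l_0}), Facts 1--3 about the dominating set, and the three-case analysis (Cases I, II, III and I$'$, II$'$) of Lemma \ref{general case} carry over verbatim: they only use the combinatorial fact that each reduction step removes one vector and two coordinates, together with the freedom-degree bookkeeping that a balanced vector is determined by all but one of its components. This step-by-step bookkeeping is agnostic to whether the balance condition is $\sum j_{q}=0$ or $\sum (-1)^{q}j_{q}=0$.

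The main obstacle I anticipate is a bookkeeping one rather than a conceptual one: verifying that every appeal to ``determine a missing coordinate from the balance of its vector'' in Facts 1--3 and in Cases III, I$'$, II$'$ remains valid under the alternating-sign balance, and in particular that the essential invocation of the condition $0\notin\bigcup S_{J'_{i}}$ in Case III still forces the desired contradiction. Since $a_{0}$ does appear among the Hankel entries (we do not impose $a_{0}\equiv 0$ here), I would need to check that the diagonal terms produced by $a_{0}$ along the anti-diagonal of $P_{n}T_{n}$ are absorbed by the general freedom-degree estimate; because $p=2k$ is even throughout and the remark after Case III notes that no $a_{0}\equiv 0$ hypothesis is needed in the even case, this should go through cleanly. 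Assembling these three ingredients yields the joint Gaussian limit (\ref{jointmoments}) for $\{\zeta_{p}\}$, and hence for $\zeta_{Q}$ with variance $\tilde{\sigma}_{Q}^{2}=\sum_{i,j}q_{i}q_{j}\tilde{\sigma}_{i,j}$.
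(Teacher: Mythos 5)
Your overall strategy (trace formula, balanced vectors, reduction of correlated pairs, cluster estimate) is exactly the paper's, and your observations that the even dimension of all vectors makes the $a_0\equiv 0$ hypothesis and the Case III analysis harmless are correct. But there is a genuine error at the combinatorial core: you carry over the Toeplitz notion of coincidence and projection based on absolute values, declaring $j_u$ and $j'_v$ a joint point when $|j_u|=|j'_v|$ and retaining the sign characters $\epsilon_\pi$. For the Hankel matrix $H_n=P_nT_n$ the hypotheses (\ref{symmetrichankel1})--(\ref{symmetrichankel3}) make the whole family $\{a_j: j\in\mathbb{Z}\}$ independent --- there is no identification $a_j=a_{-j}$ --- so $\mathbb{E}[a_{j}a_{-j}]=0$ for $j\neq 0$ and a coincidence contributes only when the signed values are equal. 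The paper therefore takes the projection $S_J$ to be $J$ itself (as a multiset of signed integers), imposes the pairing relation $y_i=y_j=x_{\pi(i)}$ with no $\epsilon_\pi$, and the dominant pairings are those matching equal values at positions of opposite parity, so that $(-1)^ij_i+(-1)^jj_j=0$.

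Two concrete consequences that your proposal gets wrong. First, your covariance formula with four integrals $\tilde f_I^{\pm},\tilde f_{II}^{\pm}$ overcounts: the ``$+$'' variants correspond to pairings $j_u=-j'_v$, which in the Hankel case have expectation zero by independence; the paper's answer has a single $g_{I}(\pi)$ (resp.\ $g_{II}(\pi)$) per partition. Second, unlike the Toeplitz case where the identity (\ref{identicaleq}) makes the balance of each block automatic once the pairing is fixed, in the Hankel case the second block's balance $\sum_{i=2p+1}^{2p+2q}(-1)^iy_i=0$ is an extra linear constraint that is \emph{not} implied by the pairing for every $\pi\in\mathcal{P}_2(2p,2q)$; the paper must restrict the sums to the subclasses $\mathcal{P}^{I}_{2}(2p,2q)$ and $\mathcal{P}^{II}_{2,4}(2p,2q)$ of partitions for which it is implied, and insert the corresponding characteristic functions into $g_I$ and $g_{II}$. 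Your Riemann-sum argument as written would miss this restriction and produce an incorrect $\tilde\sigma_{p,q}$. The cluster estimate itself does carry over essentially as you say, but only after the projection and joint-point notions are redefined in the signed sense.
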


We remark that the limit of
$\frac{1}{n}\mathrm{tr}(H_{n}/\sqrt{b_{n}})^{p}$ can be calculated
in the same way as in Toeplitz case, see also \cite{LW}. Next, we
derive briefly the variances $\tilde{\sigma}_p^{2}$ and
$\tilde{\sigma}_Q^2$.

By Lemma \ref{hankellemma}, rewrite
 \begin{equation} \zeta_{p}=\frac{1}{n b_{n}^{\frac{2p-1}{2}}}
 \sum_{i=1}^{n}\,\sum_{J}I_{J}\left(a_{J}-
\mathbb{E}[a_{J}]\right) \
\large{\delta}_{0,\sum\limits_{l=1}^{2p}(-1)^{l}j_{l}}.
 \end{equation}
Here $J=(j_{1},\ldots,j_{2p})\in \{-b_{n},\ldots,b_{n}\}^{2p}$,
$a_{J}=\prod^{2p}_{l=1}a_{j_{l}}$,
$I_{J}=\prod^{2p}_{k=1}\chi_{[1,n]}(i-\sum_{l=1}^{k}(-1)^{l}j_{l})$
and the summation $\sum_{J}$ runs over all possibile $J \in
\{-b_{n},\ldots,b_{n}\}^{2p}$. We call a vector
$J=(j_{1},\ldots,j_{2p})\in \{-b_{n},\ldots,b_{n}\}^{2p}$ is
balanced if
\be\label{balancedhankel}\sum\limits_{l=1}^{2p}(-1)^{l}j_{l}=0.\ee
Since $a_j$ and $a_{-j}$ are independent when $j\neq 0$, $S_J$, the
projection of $J$, should be $J$ itself (forget the order of its
components).
\par
The only difference is the definition of balance and projection.
Using the same technique in section \ref{variance}, we get
\begin{align}
 \mathbb{E}[\zeta_{p}\,\zeta_{q}]\longrightarrow \tilde{\sigma}_{p,q}=
\sum_{\pi\in \mathcal{P}_{2
}(2p,2q)}g_{I}(\pi)+(\kappa-1)\sum_{\pi\in \mathcal{P}_{2,4
}(2p,2q)}g_{II}(\pi)
\end{align}
as $n\rightarrow \infty$.

Here for a fixed pair partition $\pi$, we construct a projective
relation between two groups of unknowns ${y_{1},\ldots,y_{2p+2q}}$
and ${x_{1},\ldots,x_{p+q}}$ as follows:
\be{\label{unknownsrelationhankel}y_{i}=y_{j}=x_{\pi(i)} }\ee
whenever $i\thicksim_{\pi} j$.
\par
If $\pi \in \mathcal{P}_{2
}(2p,2q)$, let
\begin{align}
\label{typeIhankel} &g_{I}(\pi)=\int_{[0,1]^{2}\times [-1,1]^{p+q}}
\delta\left(\sum_{i=1}^{2p}(-1)^{i}y_{i}\right)
\chi_{ \left\{\sum\limits_{i=2p+1}^{2p+2q}(-1)^{i}y_{i}=0 \right\}}\\
&\prod_{j=1}^{2p}\chi_{[0,1]}(x_{0}-b\sum_{i=1}^{j}(-1)^{i}y_{i})
\prod_{j'=2p+1}^{2p+2q}\chi_{[0,1]}(y_{0}-b\sum_{i=2p+1}^{j'}(-1)^{i}y_{i})\,d\,y_{0}
\prod_{l=0}^{p+q}d\, x_{l}\nonumber.
\end{align}\\
\par
If $\pi \in \mathcal{P}_{2,4}(2p,2q)$, let
\begin{align}\label{typeIIhankel} &g_{II}(\pi)=
 \int_{[0,1]^{2}\times
[-1,1]^{p+q-1}}\chi_{ \left\{\sum\limits_{i=1}^{2p}(-1)^{i}y_{i}=0
\right\}}\chi_{ \left\{\sum\limits_{i=2p+1}^{2p+2q}(-1)^{i}y_{i}=0
\right\}} \displaystyle
 \\& \prod_{j=1}^{2p}\chi_{[0,1]}(x_{0}-b\sum_{i=1}^{j}(-1)^{i}y_{i})
\prod_{j'=2p+1}^{2p+2q}\chi_{[0,1]}(y_{0}-b\sum_{i=2p+1}^{j'}(-1)^{i}y_{i})\,d\,y_{0}
\prod_{l=0}^{p+q-1}d\, x_{l}.\nonumber\end{align}

Because of the existence of the  characteristic  function in the
integrals of type I and  II, we see that $g_{I}(\pi)\neq 0$ if and
only if $\sum\limits_{i=2p+1}^{2p+2q}(-1)^{i}y_{i}\equiv 0$ when
$\sum\limits_{i=1}^{2p}(-1)^{i}y_{i}=0$. Denote the subset of
$\mathcal{P}_{2}(2p,2q)$ consisting of this kind of $\pi$ by
$\mathcal{P}^{I}_{2}(2p,2q)$.
\par
$g_{II}(\pi)\neq 0$ if and only if
$\sum\limits_{i=2p+1}^{2p+2q}(-1)^{i}y_{i}\equiv
\sum\limits_{i=1}^{2p}(-1)^{i}y_{i}\equiv 0$. Denote the subset of
$\mathcal{P}_{2,4}(2p,2q)$ consisting of this kind of $\pi$ by
$\mathcal{P}^{II}_{2,4}(2p,2q)$. From the definition of balance, if
we denote $V_i=\{i_1,i_2,i_3,i_4\}$ to be the block with four
elements and $1\leq i_1<i_2\leq 2p<i_3<i_4\leq 2q$, then $i_1+i_2$
and $i_3+i_4$ must be odd. Moreover, $j+k$ is odd provided
$j,k\not\in V_i$ and $i\thicksim_{\pi} j$.
\par
From the discussion above, we get
\begin{align}
 \mathbb{E}[\zeta_{p}\,\zeta_{q}]\longrightarrow
 \tilde{\sigma}_{p,q} =
\sum_{\pi\in \mathcal{P}^{I}_{2
}(2p,2q)}g_{I}(\pi)+(\kappa-1)\sum_{\pi\in \mathcal{P}^{II}_{2,4
}(2p,2q)}g_{II}(\pi),
\end{align}
\begin{equation}
  \tilde{\sigma}_p^{2}=\tilde{\sigma}_{p,p}
\end{equation}
and \begin{equation}
 \tilde{\sigma}_Q^2 =\sum_{i=1}^{p}\sum_{j=1}^{p}q_{i}q_{j}
\tilde{\sigma}_{i,j}.
\end{equation}

\par
Similar to Corollary \ref{corollary}, we get another central limit
theorem for product of independent random variables.
\begin{coro}\label{corollary} Suppose that
$\{a_{j}:j\in\mathds{Z}\}$  is a sequence of independent random
variables satisfying the assumptions
(\ref{symmetrichankel1})--(\ref{symmetrichankel3}). For every $p\geq
1$, \be{\frac{1}{n^{\frac{2p-1}{2}}}\sum^{n}_{ j_{1},\ldots,
j_{2p}=-n}\left(\prod_{l=1}^{2p}a_{j_{l}}-\mathbb{E}[\prod_{l=1}^{2p}a_{j_{l}}]\right)
\large{\delta}_{0,\sum\limits_{l=1}^{2p}(-1)^{l}j_{l}}}\ee converges
in distribution to a Gaussian distribution $N(0,\tilde{\sigma}_p^2)$. Here
the variance $\tilde{\sigma}_p^{2}$ as above corresponds to the case $b=0$.
\end{coro}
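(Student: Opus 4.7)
The plan is to apply the mechanism behind Theorem \ref{hankeltheorem} together with the flexibility observed in Remark \ref{perturbancefactor}: the three-step CLT proof---expectation, covariance, and higher moments---never uses the specific form of the indicator $I_J$; it only uses that $I_J$ is a bounded non-negative weight producing a Riemann sum. The corollary corresponds exactly to the extreme case $I_J\equiv 1$ with $b_n=n$, which by the scaling is the ``$b=0$'' regime. Concretely, setting $I_J\equiv 1$ in the Hankel trace formula of Lemma \ref{hankellemma}, the inner sum $\sum_{i=1}^{n}$ contributes a factor of $n$, so the statistic $\mathcal{S}_p$ appearing in the statement is precisely the analogue of $\zeta_p$ from Theorem \ref{hankeltheorem} in this degenerate case. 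It then suffices to run the method of moments: show that $\mathbb{E}[\mathcal{S}_{p_1}\cdots\mathcal{S}_{p_l}]$ converges to the corresponding Gaussian mixed moment.

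Next I would compute the covariance by mimicking Theorem \ref{covariancecalculation}. The correlated-pair/reduction analysis applies verbatim: the leading contribution comes from $\pi\in\mathcal{P}^{I}_{2}(2p,2q)$ (pure pair matches, each contributing $1$) and $\pi\in\mathcal{P}^{II}_{2,4}(2p,2q)$ (one quadruple match, each contributing $\kappa-1$), while all other configurations lose at least one degree of freedom. In the resulting Riemann sums, $b=0$ collapses each factor $\chi_{[0,1]}(x_0-b\sum(-1)^{i}y_{i})$ to $\chi_{[0,1]}(x_0)$ and trivializes the outer $x_0,y_0$ integrations, so the limit is exactly the $b=0$ value of $\tilde{\sigma}_{p,q}$ from Theorem \ref{hankeltheorem}. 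Specializing to $p=q$ gives $\tilde{\sigma}_p^2$.

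The main obstacle, and the last step, is to verify that the cluster bound of Lemma \ref{general case} still kills all $l\geq 3$ contributions. The argument there is purely combinatorial: it counts freedom degrees of balanced configurations forming a cluster, and $I_J$ never enters. Hence the same bound $o(n^{(p_1+\cdots+p_l-l)/2})$ holds under our normalization $n^{-(2p-1)/2}$, and all higher mixed cumulants vanish. The one genuine subtlety is that here $a_0$ need not be zero, so the index $0$ is admissible and condition (iii) of Lemma \ref{general case} is no longer available. This is precisely why the corollary is stated only for the \emph{even} exponent $2p$: as pointed out in Remark \ref{remarka0}, condition (iii) was invoked only in the odd-total-dimension branch (Case III of the case $l'>1$), and for even $p_1,\ldots,p_l$ the remaining Cases I, II, I$'$, II$'$ suffice to force the required $o(\cdot)$ estimate. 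Combining the three ingredients via the method of moments yields $\mathcal{S}_p\Rightarrow N(0,\tilde{\sigma}_p^2)$.
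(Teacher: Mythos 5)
Your proposal is correct and follows essentially the same route the paper takes: the corollary is obtained by running the Hankel machinery of Theorem \ref{hankeltheorem} with $I_J\equiv 1$ (the degenerate $b=0$ regime of Remark \ref{perturbancefactor}), with the covariance identified as the $b=0$ value of $\tilde{\sigma}_{p,q}$ and the higher mixed cumulants killed by Lemma \ref{general case}. One small correction to your last step: for even dimensions Case III of Lemma \ref{general case} is not avoided (the configuration $l_0=0$, $m=0$ can still occur); rather, inside Case III the required element $\gamma$ of odd multiplicity exists by parity --- since $\theta_1$ has multiplicity one and $p_1$ is even --- so condition (iii) is simply not needed there, which is exactly the content of the parenthetical in Case III and of Remarks \ref{remarka0} and \ref{rem:even length}.
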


\begin{remark}\label{rem:even length}
The derivation of higher moments can be calculated in the same way
as in section \ref{highermoments}. Careful examination shows  that
the argument in case III in section \ref{highermoments} (the
existence of $\gamma$) is the only part depending on the definition
of balance. However, since $2p$ and $2q$ are even, the dimensions of
all vectors involved are even. Remark \ref{remarka0} still applies
so that one needn't care much about the definition of balance.
Moreover, $a_0$ needn't equal to $0$. The same thing happens in
section \ref{sub:section:singular value}.
\end{remark}

\subsection{Sparse Toeplitz and Hankel matrices}

A sparse matrix is a matrix in which some entries are replaced by 0,
which occurs in some application background. A sparse random matrix
provides a more natural and relevant description of the complex
system in nuclei physics, we refer to \cite{BS} for an introduction.

A sparse Toeplitz matrix can be expressed as follows. Let
$$B_{n}=\left(\bigcup_{j=1}^{r} [b_{n}^{(2j-1)},b_{n}^{(2j)}]\right)\bigcap
\{0,1,\ldots,n-1\}$$
we define
\be{\xi_{ij}=
 \begin{cases} 1,\ \ |i-j|\in B_{n};\\
0,\ \ \text{otherwise.}
\end{cases}
}\ee Then a sparse Toeplitz matrix is defined by
\be{\label{sparsematrices}T_{n}=(\xi_{ij}\,
a_{i-j})_{i,j=1}^{n},}\ee and the corresponding Hankel matrix is
$H_{n}=P_{n}T_{n}$ as before. Note that for Hankel matrices we can
define a more general case with minor adaptations:
$$B_{n}=\left(\bigcup_{j=1}^{r} [b_{n}^{(2j-1)},b_{n}^{(2j)}]\right)\bigcap
\{0,\pm1,\ldots,\pm(n-1)\}$$
and  \be{\xi_{ij}=
 \begin{cases} 1,\ \ i-j\in B_{n};\\
0,\ \ \text{otherwise.}
\end{cases}
}\ee
 In
addition, we assume that there exists a sequence $\{b_{n}\}$with
$b_{n}\ra \iy$ but $b_{n}/n \rightarrow b\in[0,1]$ such that
$\frac{b_{n}^{(j)}}{b_{n}} \ra b^{(j)}$ as $n \ra \iy$. Set
 \be
B_{+}=\bigcup_{j=1}^{r}[b^{(2j-1)},b^{(2j)}]\subseteq [0,1]\ee 

With the above assumptions, we claim:
\begin{theorem} Under the same conditions therein,
 Theorems \ref{tpg} and \ref{complextoeplitzcase} also hold  for
 sparse random Toeplitz matrices, and Theorem \ref{hankeltheorem} also holds
  for sparse random Hankel matrices.
\end{theorem}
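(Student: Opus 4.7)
The plan is to run the same three-stage analysis (mean, covariance, higher moments) as in sections \ref{expectation}--\ref{highermoments}, with the only change being that the shift indices $j_{1},\ldots,j_{p}$ now range over the set $B_{n}\cup(-B_{n})$ (resp.\ $B_{n}$, in the Hankel case as defined) instead of $\{-b_{n},\ldots,b_{n}\}$. To set this up, first I would write down the analogue of Lemma \ref{toeplitzlemma}: for $T_{n}=(\xi_{ij}a_{i-j})_{i,j=1}^{n}$ one has
\be\nonumber
\mathrm{tr}(T_{n}^{p})=\sum_{i=1}^{n}\sum_{J}a_{J}\,I_{J}\,\chi_{\{|j_{l}|\in B_{n},\,\forall l\}}\,\delta_{0,\sum_{l}j_{l}},
\ee
and similarly (with the sign alternation of Lemma \ref{hankellemma}) in the Hankel case. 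All definitions of balanced vector, projection $S_{J}$, categories $\Gamma_{i}$, correlated pair and reduction step transport verbatim; the only modification is that admissible components lie in $\pm B_{n}$ rather than $\{\pm1,\ldots,\pm b_{n}\}$.

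Next I would redo the expectation computation. The categorisation of balanced vectors into $\Gamma_{1},\Gamma_{2},\Gamma_{3}$ is structural and independent of which subset of $[-b_{n},b_{n}]$ the components are drawn from, so the estimates $\sum_{2}=0$ and $\sum_{3}=o(1)$ go through without change (each ``lost degree of freedom'' still costs a factor $b_{n}^{-1}$, because $|B_{n}|=O(b_{n})$). For $J\in\Gamma_{1}(2k)$ the Riemann sum over $\pm B_{n}/b_{n}$ now converges to an integral over $B_{+}\cup(-B_{+})$, yielding the modified moments
\be\nonumber
\widetilde{M}_{2k}=\sum_{\pi\in\mathcal{P}_{2}(2k)}\int_{[0,1]\times(B_{+}\cup(-B_{+}))^{k}}\prod_{j=1}^{2k}\chi_{[0,1]}\!\Bigl(x_{0}+b\sum_{i=1}^{j}\epsilon_{\pi}(i)x_{\pi(i)}\Bigr)\prod_{l=0}^{k}dx_{l};
\ee
in the Hankel case the integration domain changes analogously. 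The covariance calculation of section \ref{variance} is adapted in the same cosmetic way: the definitions of $f_{I}^{\pm}(\pi)$ and $f_{II}^{\pm}(\pi)$ keep their form, but each $x_{l}$ (for $l\geq1$) now integrates over $B_{+}\cup(-B_{+})$ instead of $[-1,1]$, giving limiting covariances $\widetilde{\sigma}_{p,q}$.

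Finally, for the higher-moment Lemma \ref{cluster}/\ref{general case}, the entire proof is a counting argument: after the reduction steps one bounds the cardinality of $\mathfrak{B}_{\mathbf{p}}$ by a product of factors of size $O(b_{n})$ coming from free choices of the $|j_{l}|$. Restricting each free choice to $B_{n}$ (which has cardinality $O(b_{n})$) only decreases the count, so the same bound $o(b_{n}^{(\sum p_{i}-l)/2})$ holds verbatim; in particular the dominating-set analysis, the treatment of Cases I--III, and the usage of the $a_{0}\equiv0$ hypothesis are unchanged. The Hermitian version is handled by the same trace formula together with the second-moment condition $\mathbb{E}[a_{j}^{2}]=0$, exactly as in the remark following Theorem \ref{complextoeplitzcase}. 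The Hankel case uses Lemma \ref{hankellemma} for even powers only, and Remark \ref{rem:even length} applies so the $a_{0}\equiv0$ assumption can be dropped.

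The main obstacle I foresee is purely bookkeeping: one must verify that the Riemann-sum approximations over $\pm B_{n}/b_{n}$ still converge to the corresponding integrals over $B_{+}\cup(-B_{+})$ uniformly in the partition $\pi$, and that the ``degree of freedom'' reduction in the dominating-set argument of section \ref{highermoments} is not artificially inflated by the possibility that a determined value $\theta_{j}$ might fall outside $B_{n}$ (which can only shrink the count and hence preserves the $o$-estimate). Once these compatibility checks are in place, the proof of the three theorems for the sparse models reduces to reading the band-case arguments with the indicator $\chi_{B_{n}}(|j|)$ carried along.
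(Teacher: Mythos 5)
Your proposal is correct and follows exactly the route the paper intends: the paper itself gives no detailed proof for the sparse case, offering only the remark that ``the only difference is the integral interval of variables'' (with the moments now integrated over $B=B_{+}\cup B_{-}$), and your write-up is a faithful elaboration of that observation, including the key point that restricting index choices to $B_{n}$ with $|B_{n}|=O(b_{n})$ can only decrease the counts in the cluster/dominating-set estimates. No gaps.
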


We remark that in sparse matrix case, the only difference is
integral interval of variables, for example, the moments in
 Eq. (\ref{bandtoeplitz:moment}) become
\be M_{2k}=\sum_{\pi\in \mathcal{P}_{2 }(2k)}\int_{[0,1]\times
B^{k}}\prod_{j=1}^{2k}\chi_{[0,1]}(x_{0}+b\sum_{i=1}^{j}\epsilon_{\pi}(i)\,x_{\pi(i)})
\prod_{l=0}^{k}\mathrm{d}\, x_{l},\ee where $B=B_{+}\bigcup B_{-}$,
while $B_{-}:=\{x|-x\in B_{+}\}$.

\subsection{Singular values of powers of  Toeplitz matrices}\label{sub:section:singular value}
For a real or complex (random) Toeplitz band matrix
$T_{n}=(\eta_{ij}\, a_{i-j})_{i,j=1}^{n}$ with the bandwidth
$b_{n}$, writing $T^{*} _{n}$ for the adjoint matrix of $T_{n}$,  we
consider the product $T^{\ast\,s}_{n}T^{s}_{n}$ of powers with some
$s\in \{1,2,\ldots,\}$. When the Toeplitz $T_{n}$ is replaced by a
$n \times n$ matrix, the norm of product of random matrices in the
above form was studied in \cite{BY}, when investigating the limiting
behavior of solutions to large systems of linear equations.
Recently, the limiting distribution of the product of random
matrices has been  studied in \cite{AGT} and its moments is known as
Fuss-Catalan numbers.

Set \be W_{n}^{(s)}=\frac{1}{b^{s}_{n}}T^{\ast\,s}_{n}T^{s}_{n}. \ee

For real Toeplitz case,  we assume that $\{a_{j}:j\in \mathbb{Z}\}$
is a sequence of independent real random variables satisfying
((\ref{symmetrichankel1})--(\ref{symmetrichankel3})). For complex
Toeplitz case,  we assume that  $
\{\mbox{Re}\,a_{j},\mbox{Im}\,a_{j}\}_{j\in \mathbb{Z}}$ is a
sequence of independent real random variables such that \be
\mathbb{E}[a_{j}]=0, \ \ \mathbb{E}[|a_{j}|^{2}]=1 \ \ \textrm{and}\
\mathbb{E}[a_{j}^{2}]=0 \ \textrm{for}\,\ \ j\in \mathbb{Z},\ee
(homogeneity of 4-th moments)
\be{\kappa=\mathbb{E}[|a_{j}|^{4}],}\ee
 \noindent and further (uniform boundedness)
 \be\sup\limits_{ j\in \mathbb{Z}}
\mathbb{E}[|a_{j}|^{k}]=C_{k}<\iy\ \ \  \textrm{for} \ \ \ k\in
\mathbb{N}.\ee In addition, we also assume  the bandwidth $b_{n}\ra
\iy$ but $b_{n}/n \rightarrow b\in[0,1]$ as $n\rightarrow \infty$.
Note that we have the trace formulas as in Eq. (\ref{basic:lem1})
(more like the case of even $p$ in Eq. (\ref{basic:lem2}))
\begin{equation}
 \mathrm{tr}\left( (W_{n}^{(s)})^{p}\right)=\frac{1}{b^{p s}_{n}}\sum_{i=1}^{n}\,\sum_{J}
a_{J}\,I_{J} \
\large{\delta}_{0,\sum\limits_{l=1}^{2ps}(-1)^{[\frac{l-1}{s}]_{0}}j_{l}},\quad
\quad p\in \mathbb{N}.
 \end{equation}
  Here
$J=(j_{1},\ldots,j_{2ps})\in \{-b_{n},\ldots,b_{n}\}^{2ps}$,
$I_{J}=\prod^{2ps}_{k=1}\chi_{[1,n]}(i+\sum_{l=1}^{k}(-1)^{[\frac{l-1}{s}]_{0}}j_{l})$
and $a_{J}=\prod^{2ps}_{l=1}\tilde{a}_{j_{l}}$. $[x]_{0}$ denotes
the largest integer not larger than $x$, and \be{\tilde{a}_{j_{l}}=
 \begin{cases} a_{j_{l}},\ \text{if} \ [\frac{l-1}{s}]_{0}\ \text{is even};\\
\bar{a}_{j_{l}},\ \text{otherwise.}
\end{cases}
}\ee On the other hand, we remark that when $s=1$ the  real Toeplitz
case reads
\begin{equation}
 \mathrm{tr}\left(
 (W_{n}^{(1)})^{p}\right)=\frac{1}{b^{p}_{n}}\mathrm{tr}(H_{n}^{2p}),
 \end{equation}
where $H_{n}$ denotes the Hankel matrix as in Theorem
\ref{hankeltheorem}. Analogous to Theorem \ref{hankeltheorem}, we
have the following theorem for real and complex Toeplitz matrices:
\begin{theorem} Under the above assumptions, for
 a given polynomial \be{Q(x)=\sum_{j=0}^{p}q_{j} x^{j}}\ee with
degree $p\geq 1$, then \be\frac{\sqrt{b_{n}}}{n}
\left(\mathrm{tr}Q(W_{n}^{(s)})-\mathbb{E}[\mathrm{tr}Q(W_{n}^{(s)})]\right)
\longrightarrow N(0,\hat{\sigma}_Q^2) \ee in distribution as
$n\rightarrow \infty$. Here
 $\hat{\sigma}_Q^2$ denotes  the variance.
\end{theorem}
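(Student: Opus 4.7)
The plan is to adapt the three-step template (expectation, covariance, higher moments) developed in Sections \ref{expectation}, \ref{variance} and \ref{highermoments} for the real symmetric Toeplitz case, with bookkeeping modifications dictated by the new trace formula displayed just above the theorem. With
\[
\mathrm{tr}\bigl((W_n^{(s)})^p\bigr) = \frac{1}{b_n^{ps}} \sum_{i=1}^{n} \sum_J \tilde{a}_J\, I_J\, \delta_{0,\,\sum_{l=1}^{2ps}(-1)^{[(l-1)/s]_0} j_l}
\]
in hand, I would redefine a vector $J=(j_1,\ldots,j_{2ps})\in\{-b_n,\ldots,b_n\}^{2ps}$ to be \emph{balanced} whenever $\sum_{l=1}^{2ps}(-1)^{[(l-1)/s]_0}j_l=0$. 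The projection $S_J$ is taken as before, except that in the complex case $a_j$ and $\bar a_j = a_{-j}$ index independent variables (because $\mathbb{E}[a_j^2]=0$), so $S_J$ must remember signs in the same way as in the Hankel discussion.

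For the expected trace I would follow the argument of Theorem \ref{limitmoment}: classify balanced $J$ into Categories $\Gamma_1,\Gamma_2,\Gamma_3$ according to the multiplicity structure of $S_J$, observe that only $\Gamma_1$ contributes at leading order, and recognize the surviving sum as a Riemann sum for a definite integral on $[0,1]\times[-1,1]^{ps}$ built from partial sums $\sum_{l=1}^{k}(-1)^{[(l-1)/s]_0}j_l$. For the covariance of $\mathrm{tr}(W_n^{(s)})^p$ and $\mathrm{tr}(W_n^{(s)})^q$, the reduction step from Section \ref{variance} carries over verbatim once the balance equation is replaced by the new one; as in the Hankel case (Theorem \ref{hankeltheorem}), the integrals of type I and type II acquire an extra indicator forcing each half of the pair partition to be internally balanced in the new sense, producing $\hat{\sigma}_Q^2$ as a sum over suitable subsets of $\mathcal{P}_2(2ps,2qs)$ and $\mathcal{P}_{2,4}(2ps,2qs)$. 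For the complex case the vanishing of $\mathbb{E}[a_j^2]$ additionally eliminates one of the two sign combinations in each matched pair, exactly as in Theorem \ref{complextoeplitzcase}.

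The CLT itself then reduces, by the cluster argument of Section \ref{highermoments}, to showing that clusters consisting of $l\geq 3$ balanced vectors contribute $o(1)$ to the corresponding normalized joint moment. The proof of Lemma \ref{general case} uses only three intrinsic ingredients: balance of each vector, the notion of joint point, and the reduction step. Since all three are defined purely from the homogeneous linear relation among the $j_l$, the entire case analysis (minimal dominating set, Facts 1--3, the split $l'>1$ versus $l'=1$, and subcases I--III and I${}'$--II${}'$) applies without structural change. Because every vector has even length $2p_i s$, Remark \ref{rem:even length} applies and the hypothesis $a_0\equiv 0$ is not needed; this matches the statement of the theorem.

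The main obstacle I anticipate is Case III of Lemma \ref{general case}, where the argument producing some $\gamma\neq\theta_1$ in $S_{J_1}$ with unbalanced $\pm$-count hinges on the precise form of the balance equation. For $W_n^{(s)}$ the signs $(-1)^{[(l-1)/s]_0}$ come in blocks of length $s$ rather than singly, and in the complex case one must also track which positions carry $a_j$ versus $\bar a_j$. The even dimension $2ps$ together with the same parity/zero-sum reasoning used in Remarks \ref{remarka0} and \ref{rem:even length} should resolve this, but it is the step where the transfer from the Toeplitz and Hankel proofs is least automatic and will demand the most care, particularly to verify that $\mathbb{E}[a_j^2]=0$ in the complex setting kills every would-be non-Gaussian contribution from clusters of size at least three.
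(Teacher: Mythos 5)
Your proposal follows essentially the same route as the paper: the paper likewise reduces this theorem to the Hankel-case machinery via the displayed trace formula, redefines balance by the relation $\sum_{l=1}^{2ps}(-1)^{[(l-1)/s]_0}j_l=0$, and invokes Remark \ref{rem:even length} to note that since every vector has even dimension $2p_is$ the Case III difficulty and the hypothesis $a_0\equiv 0$ both disappear. Your identification of Case III of Lemma \ref{general case} as the only balance-dependent step, and of the even-dimension argument as its resolution, is exactly the paper's own (sketched) justification.
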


\begin{remark}
Because all the involved vectors have even dimensions, according to
Remark \ref{rem:even length}, the assumption of $a_0=0$ is not
necessary.
\end{remark}

We can evaluate the variance $\hat{\sigma}_Q^2$ as in the Hankel
case, but its expression is very redundant so we omit it. Here we
only gives the limit of $p$-moment, i.e.,
\begin{align}
 &\mathrm{E}\left[\frac{1}{n}\mathrm{tr}\left( (W_{n}^{(s)})^{p}\right)\right]\longrightarrow \nonumber\\
 &\sum_{\pi\in \mathcal{P}_{2}(2ps)}\int_{[0,1]\times
[-1,1]^{ps}}\chi_{\{\sum_{i=1}^{2ps}(-1)^{[\frac{l-1}{s}]_{0}}y_{l}=0\}}\prod_{j=1}^{2ps}\chi_{[0,1]}(x_{0}+b\sum_{i=1}^{j}(-1)^{[\frac{l-1}{s}]_{0}}y_{l})
\prod_{l=0}^{ps}\mathrm{d}\, x_{l}.\label{wishat-typemoment}
 \end{align}
Here for a given partition $\pi\in \mathcal{P}_{2}(2ps)$, the
unknowns $\{y_{1},\ldots,y_{2ps}\}$ and $\{x_{1},\ldots,x_{ps}\}$
satisfies the relation: \be{y_{i}=y_{j}=x_{\pi(i)} }\ee whenever
$i\thicksim_{\pi} j$.

\begin{remark} The non-zero terms in the summation of
Eq. (\ref{wishat-typemoment}) come from such partitions as\be
\label{alternative}
(-1)^{[\frac{i-1}{s}]_{0}}+(-1)^{[\frac{j-1}{s}]_{0}}=0\ee whenever
$i\thicksim_{\pi} j$. We can treat the desired pair partitions as
follows: considering   $2p$ alternative groups
$$\overbrace{1,1,\ldots,1}^{s},\overbrace{-1,-1,\ldots,-1}^{s},\ldots,
\overbrace{1,1,\ldots,1}^{s},\overbrace{-1,-1,\ldots,-1}^{s}$$ each
of which consists of $s$ 1's  or $s$ (-1)'s. Then the pair
partitions satisfying (\ref{alternative}) correspond to the pair
matches of these $2ps$ 1 or -1  such that each pair has one 1 and
one -1. The total number of this kind of partitions is $(ps)!$.
Thus, in the special case  $b=0$, the $p$-th moment in
(\ref{wishat-typemoment}) equals $2^{ps}(ps)!$. By the property of
$\Gamma$- function, one knows that $2^{ps}(ps)!$ is the $p$-th
moment of the density
$f(x)=\frac{1}{2s}x^{\frac{1-s}{s}}\exp(-\frac{1}{2}x^{\frac{1}{s}})$
on $[0,\iy)$. However, unfortunately when $s>1$ the density $f(x)$
is not uniquely determined by its moments, see \cite{feller}. Thus
we cannot say that the distribution of eigenvalues of $W_n^{(s)}$
converges weakly to $f(x)$.
\end{remark}

\subsection{Product of several Toeplitz matrices}
In free probability theory, one usually  considers the limit of
joint moments of several independent matrices \cite{hp,ns}. Based on
this point of view, the first version \cite{LW0} of \cite {LW} by
two of the  authors,  provided the limit joint distribution  of
several independent Toeplitz and Hankel matrices. In the present
paper, we will prove Gaussian fluctuation of the joint moments for
symmetric  Toeplitz matrices (the same result proves right for
Hermitian Toeplitz case and Hankel matrices ). More precisely, given
independent symmetric Toeplitz band matrices $T_{1,n}, T_{2,n},
\ldots, T_{r,n}$, we study the limit and fluctuation of
$$\mathrm{tr}(T_{i_{1},n} T_{i_{2},n} \cdots T_{i_{p},n})$$
for $1\leq i_{1},i_{2},\ldots,i_{p}\leq r$.

First of all, we introduce some notations.  For given $1\leq i_{1},
\cdots, i_{p}, i_{p+1}, \cdots, i_{p+q}\leq r$, there exists an
associated partition $\pi_{0}=\{V_{1},\cdots,V_{s}\}$ of $[p+q]$
such that \be j\thicksim_{\pi_{0}}k \Longleftrightarrow
i_{j}=i_{k}.\ee Let $\mathcal{P}^{0}_{2}(p,q)$
 denote a subset of $\mathcal{P}_{2}(p,q)$, which consists of such pair partitions
$\pi$: \be j\thicksim_{\pi}k \Longrightarrow j\thicksim_{\pi_{0}}k
.\ee When  $q=0$, we denote $\mathcal{P}^{0}_{2}(p,0)$ by
$\mathcal{P}^{0}_{2}(p)$. Similarly,  $\mathcal{P}^{0}_{2,4}(p,q)$
 denotes a subset of $\mathcal{P}_{2,4}(p,q)$, which consists of such   partitions
$\pi$: \be j\thicksim_{\pi}k \Longrightarrow j\thicksim_{\pi_{0}}k
.\ee

\begin{theorem}
Let $T_{1,n}, T_{2,n}, \ldots, T_{r,n}$ be independent  real
symmetric random Toeplitz band matrices, each of which satisfies the
assumptions in Theorem \ref{tpg}. with the above notations, we have

  \begin{align}&\frac{1}{n\,b^{\frac{p}{2}}_{n}}
 \mathbb{E}[\mathrm{tr}(T_{i_{1},n} T_{i_{2},n} \cdots
 T_{i_{p},n})]\longrightarrow \nonumber\\
 &=\begin{cases}\sum\limits_{\pi\in \mathcal{P}^{0}_{2 }(p)}\int_{[0,1]\times
[-1,1]^{p/2}}\prod\limits_{j=1}^{p}\chi_{[0,1]}(x_{0}+b\sum_{i=1}^{j}\epsilon_{\pi}(i)\,x_{\pi(i)})
\prod\limits_{l=0}^{p/2}\mathrm{d}\, x_{l},\ \mathrm{even}\  p;\\
0, \ \ \mathrm{odd} \ p
\end{cases}\nonumber
\end{align}
as $n \longrightarrow  \iy$. Now, set \be S_{n}(i_{1},i_{2}, \ldots,
i_{p})=\frac{\sqrt{b_{n}}}{n}\frac{1}{b^{\frac{p}{2}}_{n}}\left(\mathrm{tr}(T_{i_{1},n}
T_{i_{2},n} \cdots
 T_{i_{p},n})-
 \mathbb{E}[\mathrm{tr}(T_{i_{1},n} T_{i_{2},n} \cdots
 T_{i_{p},n})]\right).\ee
Then for $p\geq 2$ the family $\{S_{n}(i_{1},i_{2}, \ldots, i_{p}):
1\leq i_{1}, \cdots, i_{p}\leq r\}$ converges in distribution to a
Gaussian family. For fixed $p,\ q \geq 2$, we have
\begin{align}
 &\mathbb{E}[S_{n}(i_{1},i_{2}, \ldots,
i_{p})\,S_{n}(i_{p+1},i_{p+2}, \ldots, i_{p+q})]\longrightarrow \nonumber\\
& \sum_{\pi\in \mathcal{P}^{0}_{2
}(p,q)}\left(f^{-}_{I}(\pi)+f^{+}_{I}(\pi)\right)+(\kappa-1)\sum_{\pi\in
\mathcal{P}^{0}_{2,4
}(p,q)}\left(f^{-}_{II}(\pi)+f^{+}_{II}(\pi)\right)
\end{align}
when $p+q$ is even and
\begin{align}
 \mathbb{E}[S_{n}(i_{1},i_{2}, \ldots,
i_{p})\,S_{n}(i_{p+1},i_{p+2}, \ldots, i_{p+q})]=o(1)
\end{align}
when $p+q$ is odd as $n\longrightarrow \infty$.

\end{theorem}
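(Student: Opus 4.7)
The plan is to mirror the three-step program of Sections \ref{expectation}--\ref{highermoments}. First I would apply the trace formula of Lemma \ref{toeplitzlemma} directly to the product $T_{i_1,n}\cdots T_{i_p,n}$, writing each matrix entry as $a_{i_l,j_l}$, where $a_{l,j}$ denotes the $j$-th random variable of the matrix $T_{l,n}$. The only new ingredient is that entries from different matrices are independent, so any mixed moment $\mathbb{E}[\prod_l a_{i_l,j_l}]$ factors over groups of positions sharing a common matrix label. Correspondingly, each subscript vector $J=(j_1,\ldots,j_p)$ comes equipped with a \emph{label vector} $(i_1,\ldots,i_p)$, and two positions can contribute to a non-vanishing moment only if their labels agree whenever their subscripts are coincident in absolute value.

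For the mean I would replay the proof of Theorem \ref{limitmoment}. The classification of balanced vectors into $\Gamma_1,\Gamma_2,\Gamma_3$ depends only on $S_J$, so the $o(1)$ bounds on $\Gamma_2,\Gamma_3$ transfer verbatim. The essential modification occurs in $\Gamma_1(p)$: if a pair $(j_u,j_v)$ has $|j_u|=|j_v|$ but $i_u\neq i_v$, then $a_{i_u,j_u}$ and $a_{i_v,j_v}$ are independent and centred, so $\mathbb{E}[a_J]$ vanishes. Surviving pair partitions $\pi$ must therefore satisfy $j\thicksim_\pi k \Rightarrow i_j=i_k$, which is precisely the condition defining $\mathcal{P}_2^0(p)$. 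The Riemann-sum argument of Section \ref{expectation} then produces the claimed integral formula, and the limit is $0$ for odd $p$ since $\mathcal{P}_2^0(p)\subseteq \mathcal{P}_2(p)=\emptyset$.

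For the covariances I would copy the reduction argument of Theorem \ref{covariancecalculation}. The definitions of correlated pair, joint point and reduction step $J\bigvee_\theta J'$ go through unchanged, but only reductions at a joint point $j_u$ whose matrix label matches that of the paired position in $J'$ yield a non-vanishing contribution (again by independence). The cross-moment $\mathbb{E}[a_J a_{J'}]-\mathbb{E}[a_J]\mathbb{E}[a_{J'}]$ is then $1$ or $\kappa-1$ exactly as before, and the emerging integrals coincide with $f_I^{\pm}$ and $f_{II}^{\pm}$, since the matrix labels restrict \emph{which} partitions $\pi$ survive but never touch the integrand. This produces the covariance with $\mathcal{P}_2^0(p,q)$ and $\mathcal{P}_{2,4}^0(p,q)$ in place of $\mathcal{P}_2(p,q)$ and $\mathcal{P}_{2,4}(p,q)$. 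Joint convergence of the family $\{S_n(i_1,\ldots,i_p)\}$ to a Gaussian family then follows by the method of moments, provided all joint cumulants of order $\geq 3$ vanish in the limit, which reduces to a cluster estimate.

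The hard part, as in Section \ref{highermoments}, will be verifying the cluster estimate of Lemma \ref{general case} in the labelled setting. I would argue that the dimensional counting behind Facts 1--3 and the minimal dominating set argument only use combinatorial properties of the projections $S_J$, so the upper bound $\mathrm{card}|\mathfrak{B}_{\mathbf{p}}|=o(b_n^{(p_1+\cdots+p_l-l)/2})$ carries over; the label constraint merely shrinks the admissible set and can only improve the estimate. The subtle point is that during the reduction procedure one must check not only that $J'_{\alpha_1}\bigvee_\theta J'_{\alpha_2}$ preserves the cluster, but also that $\theta$ realises a label-matching joint point, since otherwise the reduction is forbidden. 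Confirming that Facts 1--3 remain valid once these inadmissible reductions are excluded, and that the case-by-case freedom-degree bookkeeping (in particular Case III with the role of $a_0\equiv 0$) still closes, is the main technical hurdle. Once this is in place, the vanishing of higher cumulants and hence Wick's formula for the limiting Gaussian family follow exactly as in Section \ref{highermoments}.
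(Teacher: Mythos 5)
Your proposal is correct and follows essentially the same route the paper takes: apply Lemma \ref{toeplitzlemma} (already stated for products of distinct Toeplitz matrices), observe that independence across matrices kills every pairing not refining $\pi_{0}$ (yielding $\mathcal{P}^{0}_{2}$ and $\mathcal{P}^{0}_{2,4}$), and rerun the expectation, covariance and cluster arguments of Sections \ref{expectation}--\ref{highermoments}. The one point you flag as the main hurdle is in fact harmless: Lemma \ref{general case} is a pure counting bound on unlabelled tuples satisfying conditions (i)--(iii), and the labelled contributing tuples form a subset of these, so the $o(b_n^{(p_1+\cdots+p_l-l)/2})$ bound applies verbatim without re-examining the label-compatibility of reduction steps.
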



%
%



\begin{thebibliography}{99}
\bibitem{AGT} Alexeev, N., G$\ddot{o}$tze,  F.   and  Tikhomirov, A.: Asymptotic distribution of singular values of
powers of random matrices. arXiv: 1002.4442v1

\bibitem{AZ} { Anderson, G.} and { Zeitouni, O.}:
A CLT for a band matrix model. {\it Probab. Theory Related Fields}
{\bf 134} no. 2, 283--338 (2006).


\bibitem{bai} Bai, Z. : Methodologies in
spectral analysis of large-dimensional random matrices, a review,
\textit{Statist.\ Sinica.} \textbf{9} (1999), 611--677.
\bibitem{BS} Bai, Z.,  Silverstein, J. W. : Spectral analysis of large dimensional  random matrices.
 Science Press, Beijing,  2006


\bibitem{BY} Bai, Z.,   Yin, Y.Q.: Limiting behavior of the norm of products of random
matrices and two problems of Geman-Hwang \textit{ Probab. Theory
Related Fields}\textbf{73}(1986), 555--569.




\bibitem{bb}  Basak, A.  and Bose, A.:  Limiting spectral distribution of some band matrices, preprint
2009.

\bibitem{basor} Basor, E. L. :  Toeplitz determinants, Fisher--Hartwig symbols and random matrices,
\textit{Recent Perspectives in Random Matrix Theory and Number
Theory, London Math. Soc. Lecture Note Ser. } \textbf{322} (2005),
309--336.

\bibitem{bcg}  Bose, A., Chatterjee, S. and Gangopadhyay, S. :  Limiting spectral distribution of large dimensional
random matrices, \textit{J. Indian Statist. Assoc.} \textbf{41}
(2003), 221--259.

\bibitem{bm} Bose, A. ,Mitra, J. :  Limiting spectral distribution of a special
circulant, \textit{Stat. Probab. Letters} \textbf{60} (2002), no.1,
111--120.


\bibitem{bdj}  Bryc, W., Dembo, A. and Jiang,  T.: Spectral measure of large random Hankel, Markov and Toeplitz
matrices, \textit{Ann.\ Probab.} \textbf{34} (2006), no.1, 1--38.








\bibitem{chatterjee}Chatterjee, S.: Fluctuations of eigenvalues and second order
Poincar$\acute{e}$ inequalities, Probab. Theory Related Fields 143,
1--40 (2009).

\bibitem{diaconis} Diaconis, P. : Patterns in eigenvalues: The 70th Josiah Willard Gibbs
lecture,  \textit{Bull. Amer. Math. Soc.} \textbf{40} (2003),
115--178.

\bibitem{DEvans} {Diaconis, P.} and { Evans, S.~N.}: Linear functionals of eigenvalues of random
matrices. {\it Trans. Amer. Math. Soc.} {\bf 353} 2615--2633(2001).

\bibitem{Dsha} {Diaconis, P.} and { Shahshahani, M. }:
 On the eigenvalues of random matrices: Studies in applied
probability. {\it J. Appl. Probab.} {\bf 31A}, 49--62(1994).

\bibitem{DE} Dumitriu, I.  and Edelman, A.:
 Global spectrum fluctuation for the $\beta$-Hermite and $\beta$-Laguerre ensembles
via matrix models, \textit{Journal of  Mathematical Physics}
\textbf{47}(6), 063302--063336(2006)

\bibitem{feller} Feller, W.~: {\it An Introduction to Probability Theory and Its
Applications\/}, Volum 2, Wiley, New York, 1971.



\bibitem{GS} Grenander, U.,  Szeg$\ddot{o}$, G.:
{\it Toeplitz forms and their applications}. University of
California Press, Berkeley-Los Angeles, 1958.

\bibitem{hm} Hammond, C.~  and Miller, S. J.:  Distribution of eigenvalues for the ensemble of
real symmetric Toeplitz matrices, \textit{J.\ Theoret.\ Probab.}
\textbf{18} (2005), 537--566.

\bibitem{hp} Hiai, F.~ and Petz,  D.:  {\it The Semicircle Law, Free Random Variables and Entropy\/},
Amer. Math. Soc., Providence, RI, 2000.



\bibitem{johansson1} {Johansson, K.}: On  Szego's asymptotic formula for Toeplitz determinants
and generalizations. Bull. Sci. Math. (2)  {\bf 112},  257--304
(1988).


\bibitem{johansson3} {Johansson, K.}:
 On the fluctuation of eigenvalues of random Hermitian matrices. {\it Duke Math. J.} {\bf 91}, 151--204 (1998).


\bibitem{jonsson} Jonsson, D.: Some limit theorems for the eigenvalues of a sample covariance
matrix. {\it J. Mult. Anal.} {\bf 12},  1--38(1982).

\bibitem{kargin} Kargin, V.: Spectrum  of random Toeplitz matrices with band structures,
 Elect. Comm. in Probab. 14 (2009), 412--421





%
%
%
























\bibitem{hj} Horn, R.~A.~, Johnson, C. R.: {\it Matrix Analysis\/},  Cambridge University Press, New York, 1985.


\bibitem{LW0} Liu, D.-Z.,  Wang, Z.-D.: Limit Distributions for Random Hankel, Toeplitz Matrices and Independent
Products, arXiv:0904.2958v1

\bibitem{LW} Liu, D.-Z.,  Wang, Z.-D.: Limit Distribution of Eigenvalues for Random Hankel and Toeplitz Band Matrices,
\textit{Journal of Theoretical Probability},
10.1007/s10959-009-0260-4 (2009)
\bibitem{mms} Massey, A.,  Miller, S. J. and Sinsheimer,  J.: Distribution of Eigenvalues of Real Symmetric
Palindromic Toeplitz Matrices and Circulant Matrices, \textit{J.\
Theoret.\ Probab.} \textbf{20} (2007), 637--662.

\bibitem{ns} Nica, A.~ and Speicher, R.: {\it Lectures on the Combinatorics of Free Probability\/},
Cambridge University Press,  2006.


\bibitem{popescu} { Popescu, I.}:
 General tridiagonal random matrix models, limiting distributions
and fluctuations. {\it Probab. Theory Related Fields} {\bf 114},
179--220 (2009).

\bibitem{SS} Sinai, Y. and Soshnikov, A.:  Central Limit Theorem for Traces of
Large Random Symmetric Matrices With Independent Matrix Elements,
\textit{Bol. Soc. Bras. Mat } \textbf{29} (1998), 1--24.







%







%
%
%
%
%

























\end{thebibliography}
\end{document}